\documentclass[10pt, reqno]{amsart}

\usepackage[T1]{fontenc}
\usepackage[utf8]{inputenc}
\usepackage[english]{babel}

\usepackage{latexsym, amsmath, amssymb, amsthm, mathtools, mathrsfs}
\usepackage[hypertexnames=false]{hyperref}
\hypersetup{
    hidelinks,
    colorlinks,
    linkcolor={red!50!black},
    citecolor={blue!50!black},
    urlcolor={blue!80!black},
}

\usepackage{enumerate}

\mathtoolsset{showonlyrefs=true}

\usepackage[textsize=tiny]{todonotes}
\usepackage{xcolor}

\usepackage[
    a4paper,
    left=1in,
    right=1in,
    top=1.3in,
    bottom=1.3in
]{geometry}

\def\R{\mathbb R}
\def\N{\mathbb N}

\newcommand{\X}{X}
\newcommand{\Y}{Y}
\newcommand{\Z}{\mathbb Z}
\def\dim{\mathrm {dim}}
\def\h{\widehat}
\def\t{\widetilde}
\def\l{\overline}
\def\la{\langle}
\def\ra{\rangle}

\def\cH{\mathcal H}

\def\cO{\mathcal O}
\def\cS{\mathcal S}

\def\distr{\mathcal{H}}
\def\MCP{\mathrm {MCP}}
\def\Cut{\mathcal{C}}
\def\loc{\mathrm {loc}}
\def\Abn{\mathrm{Abn}}

\newtheorem{theorem}{Theorem}

\newtheorem{lemma}[theorem]{Lemma}
\newtheorem{proposition}[theorem]{Proposition}
\newtheorem{corollary}[theorem]{Corollary}

\newtheorem{definition}[theorem]{Definition}

\newtheorem{remark}[theorem]{Remark}

\title[Two-step ideal sR manifold with no MCP]{A two-step ideal sub-Riemannian structure with no measure contraction properties}
\date{\today}

\author[L. Rizzi]{Luca Rizzi}
 \address[L.~Rizzi]{SISSA, Via Bonomea 265, 34136 Trieste (IT)}
 \email{lrizzi@sissa.it}

\author[Y. Zhang]{Ye Zhang}
 \address[Y.~Zhang]{SISSA, Via Bonomea 265, 34136 Trieste (IT)}
 \email{yezhang@sissa.it}

\subjclass[2020]{53C17, 53C21}
\keywords{measure contraction property, sub-Riemannian manifold}

\begin{document}

\begin{abstract}
We construct a compact, equiregular, ideal sub-Riemannian manifold of step~$2$ such that, for every smooth positive measure, the $\MCP(K,N)$ fails for all $K\in\R$ and $N\in (1,\infty)$. This shows that the real-analyticity assumption in the measure contraction theorem of Badreddine and Rifford in \cite{BR20} cannot be replaced by smoothness. Moreover, our structure is ideal, i.e.\ it admits no non-trivial abnormal minimizing geodesics. Although failures of the measure contraction property for ideal structures were recently obtained in the higher-step setting, our construction shows that the phenomenon can already occur on compact, equiregular, ideal structures \emph{of step~$2$}. The proof exploits a differential consequence of the measure contraction property, namely a uniform upper bound for the sub-Laplacian of the squared distance near its base point, and constructs a structure for which this quantity is unbounded.
\end{abstract}

\maketitle

\section{Introduction}

The works of Sturm \cite{S060,S06} and Lott--Villani \cite{LV09} introduced, through optimal transport, synthetic curvature-dimension conditions for metric measure spaces. These conditions are stable under measured Gromov--Hausdorff convergence and retain many of the geometric and functional-analytic consequences of lower Ricci curvature bounds in the nonsmooth setting. However, despite arising as measured Gromov--Hausdorff limits of Riemannian manifolds, sub-Riemannian manifolds do not satisfy these curvature-dimension conditions unless they are Riemannian \cite{J09,J10,AS20,J21,RS23,MR23,NP25}.

A weaker synthetic curvature bound, known as the measure contraction property ($\MCP$ for short), was independently introduced by Sturm \cite{S06} and Ohta \cite{O07}. Whereas the curvature-dimension condition controls optimal transport between arbitrary absolutely continuous probability measures, the measure contraction property controls contraction toward a fixed point. It nevertheless retains several important geometric consequences of curvature-dimension bounds and has therefore emerged as a natural synthetic curvature condition in sub-Riemannian geometry. In the pioneering work \cite{J09}, Juillet proved that the Heisenberg groups satisfy the measure contraction property. Since then, its validity has been established for several classes of sub-Riemannian structures; see, among others, \cite{R13,AL14,LLZ16,R16,BR18,BR19,RS19,BGKT19,BR202,M-QCD,GN24,BGRV25,Z25,BMR26} and the references therein.

A general result in this direction was obtained by Badreddine and Rifford \cite{BR20}, who proved that the $\MCP(0,N)$ holds for some finite $N>1$ on every compact, real-analytic sub-Riemannian manifold of step~$2$. They also established an analogous result for Lipschitz Carnot groups, namely Carnot groups whose  squared distance function is locally Lipschitz in charts away from the diagonal. Since every step-$2$ sub-Riemannian structure enjoys this local Lipschitz regularity (in fact, even along the diagonal \cite{AL09}), it was natural to ask whether real-analyticity in their compact-manifold result could be replaced by smoothness. This question remained open because real-analyticity enters only through a technical finite-order estimate, namely \cite[Lemma~2.11]{BR20}. Our main result gives a negative answer.

\begin{theorem}\label{tgol}
There exists a compact, equiregular, ideal sub-Riemannian manifold of step~$2$ such that, for every smooth positive measure, the corresponding metric measure space fails to satisfy the $\MCP(K,N)$ for every $K\in\R$ and every $N\in(1,\infty)$.
\end{theorem}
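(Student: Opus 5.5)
The plan follows the strategy announced in the abstract: first extract from $\MCP(K,N)$ a pointwise differential inequality, then build a smooth step-$2$ structure that violates it.

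\textbf{Differential consequence of $\MCP$.} The first step is to show the following. Suppose $(M,d,\mu)$ satisfies $\MCP(K,N)$, with $\mu$ a smooth positive measure. Then for every base point $x_0$ there exist a neighbourhood $U$ of $x_0$ and a constant $C$ such that
\[
\Delta_\mu \tfrac12 d^2(x_0,\cdot)\le C
\]
at every smooth point of $f=\tfrac12 d^2(x_0,\cdot)$ in $U\setminus\{x_0\}$; in an ideal structure these points form an open dense, full-measure set. Here $\Delta_\mu$ is the sub-Laplacian associated with $\mu$.

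To derive this, take a point $x$ where $f$ is smooth. The contraction map $x\mapsto e_t(x)$ toward $x_0$ is given by the flow of the gradient $-\nabla f$, suitably reparametrized. Applying $\MCP$ to small balls around $x$ and letting the radius tend to zero gives a Jacobian bound of the form
\[
J_t(x)\ge \tau^{(1-t)}_{K,N}(d)^N .
\]
Differentiating at $t=1$ then yields
\[
\operatorname{div}_\mu(\nabla f)\le N+O(d^2),
\]
uniformly for $d$ small. Two facts make the passage to the smooth measure harmless: changing $\mu$ only adds a term $\langle\nabla\log\rho,\nabla f\rangle=O(d)$, and by the Lipschitz regularity of $d^2$ in step $2$ \cite{AL09} we have $|\nabla f|=d$.

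\textbf{Construction.} The structure will be a family of quasi-Heisenberg-type distributions on $M=\mathbb T^2\times N'$, or on a compact quotient, with step~$2$ and equiregular rank. The distribution is twisted by a smooth but non-analytic function $\phi$, for instance one that is flat at a point or oscillates like $e^{-1/s^2}\sin(1/s)$. Ideality must hold: there are no nontrivial abnormal minimizers, which we ensure by taking the bracket-generating condition fat enough (a contact or fat distribution has no nontrivial abnormals). Away from a hypersurface the structure will be contact or fat, and the flatness of $\phi$ will be used only in the lower-order normal-form terms.

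The mechanism to aim for is the following. For geodesics from $x_0$ whose initial covectors approach a degenerate direction, the first conjugate time, and hence the cut time, should be governed by the vanishing order of $\phi$. Infinite-order flatness then forces conjugate points to accumulate toward $x_0$ in a way that is not scale-compatible. Along normal geodesics $\Delta f$ is computed as $\operatorname{tr}$ of the Jacobi-field operator; it behaves like $\cot$-type terms that blow up near the first conjugate point. The goal is to exhibit points $x_n\to x_0$, all smooth points of $f$, with $\Delta f(x_n)\to+\infty$. By the first step, this contradicts $\MCP(K,N)$ for every $K$ and $N$, and for every smooth $\mu$. This is exactly the finite-order estimate in \cite[Lemma~2.11]{BR20} failing.

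\textbf{Main obstacle.} The hard part is the explicit control of the Jacobi equation near $x_0$ for the non-analytic model. We need two things at once: an asymptotic formula for $\Delta f$ along a carefully chosen family of geodesics, and a guarantee that the chosen points $x_n$ are not cut points, so that $f$ is smooth there. I would first try a model in which the Hamiltonian flow is explicitly integrable, for example a left-invariant-like structure with a rescaled rotation, with the frequency $\phi(s)$ depending on a transverse variable $s$. Along geodesics of length $\ell$ at level $s_n$, the expected behaviour is
\[
\Delta f\sim \ell\,\phi'(s_n)/\phi(s_n)\cdot(\text{bounded}).
\]
By choosing $\phi$ with $\phi'/\phi$ unbounded on a sequence $s_n\to0$, while $\ell\,\phi(s_n)$ stays below the conjugate threshold, we obtain the divergence.
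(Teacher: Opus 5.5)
Your first step matches the paper's: it also goes through the differential form of $\MCP$ (Proposition~\ref{pchaMCP}, Remark~\ref{rmk:echaMCP2}), $\limsup_j \Delta_\mu d_x^2(x_j)\le 2N$ along smooth points $x_j\to x$. It also handles an arbitrary smooth $\mu$ by bounding the drift term. (A small point: $|\nabla_\cH f|=d$ at smooth points is the horizontal eikonal equation, not a consequence of \cite{AL09}.)

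The genuine gap is that the theorem is an existence statement and you never produce the structure. Moreover, the two concrete ingredients you sketch would not work.

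\emph{Ideality via fatness is ruled out.} A compact manifold with fat distribution satisfies $\MCP(0,N)$ for some $N$ \cite[Corollary~9.24]{BMR26}, so your example must be non-fat somewhere. Fatness away from a hypersurface says nothing on the hypersurface, and that is exactly where abnormal minimizers tend to appear. The Martinet distribution is the standard example: its locally minimizing abnormal curves lie in the singular surface, and in rank two this degeneration also destroys equiregularity. Ideality therefore has to be verified directly on a non-fat structure.

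\emph{Your blow-up mechanism has the wrong sign.} Near a first conjugate point the Jacobian of the exponential map tends to $0$, so the cotangent-type terms drive $\Delta f$ to $-\infty$. That is compatible with the upper bound coming from $\MCP$. What you need is $\Delta f(x_n)\to+\infty$ at smooth points $x_n\to x_0$, which requires a degeneration at the \emph{start} of the geodesics. Your heuristic $\Delta f\sim \ell\,\phi'/\phi$ is neither derived nor sign-controlled, and smoothness of $f$ at $x_n$ is never secured.

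The missing idea is a structure in which everything can be computed. On $\R^4$ take $X_1=\partial_x+(z+A(y))\partial_w$, $X_2=\partial_y$, $X_3=\partial_z$, with $A$ smooth, $1$-periodic and not affine on any interval.
\begin{itemize}
\item Since $[X_3,X_1]=\partial_w$, the structure is equiregular of step $2$ (and not fat).
\item The abnormal equations force $u_1=0$. By Lemma~\ref{lemcaldis} abnormal minimizers are then straight segments, and \eqref{expdiffend3} would make $A$ affine on an interval. This gives ideality with no appeal to fatness.
\item Points $(0,\tau,0,0)$ are reached from $0$ by straight, strictly normal minimizers without conjugate points, so they are smooth points of $d_0^2$.
\item A second difference quotient in $x$ reduces $X_1^2d_0^2(0,\tau,0,0)$ to the quadratic program $\min\{\|\omega\|_{L^2}^2:\int_0^1\omega=1,\ \int_0^1\omega(s)A(\tau s)\,ds=0\}$. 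Its value is $\int_0^1A(\tau s)^2ds\big/\bigl(\int_0^1A(\tau s)^2ds-(\int_0^1A(\tau s)ds)^2\bigr)$.
\item Choose $A$ as flat bumps $\psi_{2^{-j}}$ on $[\tau_{j+1},\tau_j]$, so that Cauchy--Schwarz becomes asymptotically sharp along $\tau_j\to0$. The value above then diverges to $+\infty$.
\item Periodicity of $A$ allows passing to the compact quotient by $\Z^4$ under the group law \eqref{groupa}.
\end{itemize}
Your remark that this is where \cite[Lemma~2.11]{BR20} breaks down is correct in spirit, but the proposal supplies none of this.
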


Recall that an ideal sub-Riemannian manifold is complete and admits no non-trivial abnormal minimizing geodesics. Although failures of the measure contraction property within the ideal class were recently obtained in the higher-step setting in \cite{BR25}, our construction shows that this phenomenon already occurs at step~$2$. This is in sharp contrast with the counterexamples recently discovered  in \cite{BR25}, where failure of the measure contraction property is linked with the lack of local Lipschitzness for the metric tangents, which can happen only for step greater than or equal to $3$.

The result is also complementary to the positive theorem for compact sub-Riemannian manifolds with fat distributions obtained in \cite[Corollary~9.24]{BMR26}. Fat structures are ideal, but fatness is a considerably stronger condition. Theorem~\ref{tgol} therefore shows that the latter result cannot be extended to general ideal step-$2$ structures without imposing some additional geometric or analytic condition.

In the broader sub-Finsler setting, failures of all measure contraction properties were exhibited in \cite{BT25}, where the authors proved that the $\ell^p$-sub-Finsler Heisenberg group does not satisfy the $\MCP(K,N)$ for any $K\in\mathbb R$ and $N\in (1,\infty)$ whenever $p\in(2,\infty]$. See also \cite{BMRT24-sufFinslerHeisenberg} for more general results. These examples are not sub-Riemannian and therefore do not address the question considered here.

\subsection{Sketch of the argument.} Let us briefly describe the mechanism behind our construction. On an ideal sub-Riemannian manifold, the measure contraction property admits a differential characterization in terms of the sub-Laplacian of the squared distance. In particular, the $\MCP$ implies a uniform upper bound for $\Delta_\mu d_x^2$ at smooth points approaching the base point $x$. We construct a smooth, equiregular, step-$2$ structure on $\R^4$ and a sequence of smooth points $x_j\to 0$ such that
\[
\Delta_\mu d_0^2(x_j)\longrightarrow+\infty
\]
for every smooth positive measure $\mu$. This contradicts the differential characterization of the $\MCP(K,N)$ for every choice of the parameters.

The structure depends on a single smooth function $A$. The relevant horizontal second derivative of $d_0^2$ can be computed explicitly and is governed by the quotient
\[
\frac{\left(\int_0^1 A(\tau s) ds\right)^2}
{\int_0^1 A(\tau s)^2 ds}.
\]
We construct $A$ and a sequence $\tau_j\to0^+$ for which this quotient converges to~$1$ from below. In other words, the Cauchy--Schwarz inequality becomes asymptotically sharp along the rescalings $A(\tau_j\cdot)$, while the functions remain nonconstant. The resulting degeneracy forces the horizontal second derivative, and hence $\Delta_\mu d_0^2$, to diverge. The function $A$ can moreover be chosen periodic, allowing the local construction to be compactified by quotienting with respect to a cocompact lattice.

The real-analyticity assumption in \cite{BR20} prevents precisely this type of infinite-order degeneracy. One might expect that it could be replaced by a suitable finite-order condition formulated in terms of ample curves and their geodesic flags \cite{ABR-curvature}. Theorem~\ref{tgol} shows that no such condition follows from smoothness, equiregularity, step~$2$, even for ideal structures. In particular, there appears to be no characterization depending only on the step and the absence of abnormal minimizing geodesics that guarantees the measure contraction property.

\subsection*{Use of AI}
In accordance with the Leiden Declaration on Artificial Intelligence and Mathematics, we disclose the following use of generative AI. After being provided with a counterexample to \cite[Lemma 2.11]{BR20}, independently constructed by the authors, OpenAI’s ChatGPT (GPT-5.5 Pro) suggested the structure \eqref{framecont}, which was subsequently incorporated into the construction presented here. The authors independently developed the resulting construction, verified all proofs and computations, and modified the arguments where necessary. All final mathematical judgments and decisions were made by the authors, who assume full responsibility for the correctness and content of the manuscript.

\subsection*{Funding} This work was supported by the European Research Council (ERC) under the European Union’s Horizon 2020 research and innovation programme (grant agreement GEOSUB, No.\ 945655), and by the IRP project GEOSUBMAN by INSMI-CNRS. The authors also acknowledge the support from INdAM.

\section{Preliminaries}\label{sP}

\subsection{Sub-Riemannian manifolds}

We recall some basic facts about sub-Riemannian manifolds. We refer the reader to \cite{M02, R14, ABB20} for more details. Note that the definition given below actually includes the classical constant-rank case, see for example \cite[\S~3.1.4]{ABB20}.

\medskip

A \emph{sub-Riemannian structure} on a smooth, connected $n$-dimensional manifold $M$, where $n \ge 2$, is defined by a set of $m (\ge 2)$ global smooth vector fields $\X_{1}, \ldots, \X_{m}$, called a \emph{generating frame}. We always assume the \emph{bracket-generating} condition, i.e., the vector fields $\X_{1},\ldots, \X_{m}$ and their iterated Lie brackets at $x$ generate the tangent space $T_x M$, for all $x\in M$. The \emph{distribution} is the possibly rank-varying family of subspaces of the tangent spaces spanned by the vector fields at each point
\begin{equation}
\distr_{x}\coloneq\mathrm{span}\{\X_{1}(x),\ldots,\X_{m}(x)\}\subset T_{x}M,\qquad \forall\, x\in M.
\end{equation}
Similarly, for $i\ge 1$, we can define the \emph{iterated distributions} by
\[
\distr^{i}_{x}\coloneq\mathrm{span}\{[\X_{k_{1}},[\ldots,[\X_{k_{j-1}},\X_{k_{j}}]]](x) : \,  1\le j\le i, \, 1 \le k_{1}, \ldots, k_{j} \le m \} \subset T_xM.
\]
The \emph{step} of the distribution at $x$, which we denote by $s_x$, is the minimal number $r \ge 1$ such that $\distr^{r}_{x}=T_{x}M$. The \emph{step} of the distribution is $s \coloneq \sup_{x \in M} s_x$.  The distribution is \emph{equiregular}  if $\dim \distr^i_x$ is  constant for all $x \in M$ and $i\ge 1$. 
 
\medskip

The generating frame induces an inner product $g_{x}$ on $\distr_{x}$ given by:
\begin{equation}
g_{x}(v,v)\coloneq\min\left\{\sum_{i=1}^{m}u_{i}^{2}:\,  v=\sum_{i=1}^{m}u_{i}\X_{i}(x)\right\},\qquad \forall\, v\in \distr_x.
\end{equation}
A \emph{horizontal curve} is an absolutely continuous (in local charts) map $\gamma : [0,1] \to M$ such that there exists $u\in L^{2}([0,1],
\R^{m})$, called \emph{control}, satisfying
\begin{equation}\label{eq:admissible}
\dot\gamma(t) =  \sum_{i=1}^m u_i(t) \X_i(\gamma(t)), \qquad \mathrm{a.e. }\; t \in [0,1].
\end{equation}
We define the \emph{length} of a horizontal curve
\begin{equation}
\label{lenght}
\ell(\gamma) \coloneq \int_0^1 \sqrt{g_{\gamma(t)}(\dot{\gamma}(t),\dot{\gamma}(t))} \, dt
\end{equation}
and the \emph{sub-Riemannian (or Carnot--Cara\-th\'eodory) distance} between two points $x, y \in M$
\begin{align}\label{CCdis}
d(x,y) \coloneq \inf\{\ell(\gamma): \, \gamma  \mbox{ horizontal}, \gamma(0) = x, \gamma(1) = y\}.    
\end{align}
By the celebrated Rashevskii--Chow theorem, $d$ is a well-defined distance and its induced topology is the same as the manifold topology. In particular, this means that $d$ is continuous. A horizontal path $\gamma: [0,1]\to M$ with constant speed whose length equals the distance between its endpoints is called a \emph{minimizing geodesic}. We say a sub-Riemannian manifold $M$ is \emph{complete} if the pair $(M,d)$ is complete as a metric space. In that case, the infimum in \eqref{CCdis}  is attained.

\medskip

 Instead of $\ell$, it is more convenient to minimize the following {\it energy functional} 
\begin{align} \label{defJ}
\mathcal{J}(\gamma)\coloneq \frac{1}{2} \int_0^1 g_{\gamma(t)}(\dot{\gamma}(t),\dot{\gamma}(t)) \, d t.
\end{align}
Among all horizontal curves with fixed endpoints, and parametrized with constant speed, the minimizers of $\ell$ coincide with the minimizers of $\mathcal{J}$. Furthermore, we have the following relation:
\begin{equation}
\label{CauchProblem}
\frac{1}{2}d^2(x,y) = \inf\{\mathcal{J}(\gamma): \, \gamma \mbox{ horizontal}, \gamma(0) = x, \gamma(1) = y\}.
\end{equation} 
Now fix $x \in M$. Let $\mathcal{U}_x \subset L^2([0,1],\R^m)$ be the set of elements $u$ such that the following equation 
\begin{align}\label{ODEendpoint}
\dot{\gamma}_u(t) = \sum_{i = 1}^m u_i(t) \X_i(\gamma_u(t)), \qquad \gamma_u(0) = x,    
\end{align}
has a solution defined for $t \in [0,1]$. We can define \emph{the end-point map based at $x$}  by 
\begin{equation}
    E_x: \mathcal{U}_x \to M, \qquad u \mapsto \gamma_u(1),
\end{equation}
where $\gamma_u$ is the solution of \eqref{ODEendpoint}. It is well-known that $\mathcal{U}_x$ is open and $E_x$ is a smooth map. It is convenient to define an energy functional on $\mathcal{U}_x$, by letting $J(u) \coloneq \mathcal{J}(\gamma_u)$. Thus, the problem of finding minimizing geodesics amounts to minimize  $J$ among all controls $u \in L^2([0,1],\R^m)$ such that $E_x(u) = y$.  By the method of Lagrange multipliers,  if $u$ is such a minimizing control and $\gamma_u$ is the corresponding minimizing geodesic, there exists a non-trivial pair $(\lambda, \nu)$, such that
\begin{equation}
\label{nu}
\langle \lambda, d E_x (u) \rangle = \nu d J (u), \qquad \lambda \in T_y^* M, \ \nu \in \{0,1\}.
\end{equation}
Here $d$ is the (Fr\'echet) differential and $\la \cdot, \cdot \ra$ denotes the action of covectors on vectors. Given $\nu$ introduced in \eqref{nu}, the minimizing geodesic $\gamma_u$ is called \emph{normal} if $\nu = 1$ and \emph{abnormal} if $\nu = 0$. We remark that a minimizing geodesic could be both normal and abnormal at the same time since the pair $(\lambda, \nu)$ is not necessarily unique. We say that a minimizing geodesic $\gamma : [0,1]\to M$ \emph{contains no non-trivial abnormal segments} if for any $0 \le s_1 < s_2 \le 1$ the restriction $\gamma|_{[s_1,s_2]}$ is not abnormal. A sub-Riemannian structure is \emph{ideal} if it is complete and there are no non-trivial abnormal minimizing geodesics.

\subsection{Measure contraction property}

In the following we assume the underlying  space $(M,d,\mu)$ is a \emph{length space  with negligible cut loci}, i.e.\ a metric measure space with distance function $d$, Borel measure $\mu$, and  for every $x \in M$, there exists a negligible set $\Cut (x)$ and a measurable function $\Phi^x: M \setminus \Cut (x) \times [0,1] \to M$, such that the curve $t \mapsto \Phi^x(y,t)$ is the unique length minimizing geodesic joining $x$ and $y$. On such a space, we define the set of \emph{$s$-intermediate points} by
\[
Z_s(x,A) \coloneq \{\Phi^x(y,s):  y \in A \setminus \Cut (x)\}, \quad \forall \, s \in [0,1], x \in M, A \ \mbox{measurable}.
\]
For metric spaces with negligible cut locus, Ohta's measure contraction property has the following simplified formulation (compare with \cite[Definition 2.1 and Lemma 2.3]{O07}).
\begin{definition}[Measure contraction property]\label{def:MCP}
A length space $(M,d,\mu)$ with negligible cut loci satisfies $\MCP(K,N)$ for some $K \in \R$ and $N \in (1,\infty)$ if for every $x \in M$ and measurable $A \subset M$ with $0<\mu(A)<\infty$  we have
\begin{align}\label{RMCP}
	\mu(Z_s(x,A)) \ge \int_A s \left[ \frac{\mathsf{s}_K(s d(x,y)/\sqrt{N - 1})}{\mathsf{s}_K(d(x,y)/\sqrt{N - 1})} \right]^{N - 1} d\mu(y),
	\qquad\forall \, s\in[0,1],
\end{align}
where we adopt the convention that $0/0 = 1$. Here  the function $\mathsf{s}_K$ is defined by
\[
\mathsf{s}_K(t) \coloneq \begin{cases}
(1/\sqrt{K}) \sin(\sqrt{K} t)  & \text{if $K > 0$}, \\
t  & \text{if $K = 0$}, \\
(1/\sqrt{-K}) \sinh(\sqrt{-K} t)  & \text{if $K < 0$}.
\end{cases}
\]
Similarly, a length space $(M,d,\mu)$ with negligible cut loci satisfies $\MCP_\loc(K,N)$ for some $K \in \R$ and $N \in (1,\infty)$ if for every $x_0 \in M$ there exists a neighbourhood $\cO$ such that for all $x \in \cO$ and all  measurable $A \subset \cO$	with $0<\mu(A)<\infty$, \eqref{RMCP} holds.
\end{definition}

\begin{remark}[On the definition]
In \cite[Definition 2.1]{O07} it is required that, if $K > 0$ and $N>1$, it holds $A \subset B(x,\pi \sqrt{(N - 1)/K})$, where $B(x,r)$ is the open ball centered at $x$ with radius $r$. This requirement is not necessary: in fact, in this case, $M$ has diameter bounded by $\pi \sqrt{(N - 1)/K}$ by \cite[Theorem 4.3]{O07}.   We also note that, in Definition \ref{def:MCP}, the set $Z_s(x,A)$ can be replaced by the slightly larger set obtained by taking the set of all points at time $s \in [0,1]$ of geodesics joining $x$ with $A$ (so, also all geodesics from $x$ to $\Cut(x) \cap A$ are taken into account). The resulting definition is easily seen to be equivalent. Finally, we note that there are different and more general versions of the $\MCP$ in the literature. All of them imply Ohta's version for general metric measure spaces. See for example \cite[\S~10]{BR25}.
\end{remark}

\begin{remark}[The case $N=1$]\label{rmk:N1K>0}
In \cite[Definition 2.1]{O07} Ohta also considers the case $N=1$ and $K\leq 0$. On the other hand, he omits the case $N=1$ and $K>0$, commenting that, in this case, $M$ must be a point; see \cite[Remark 2.2]{O07}. Since giving a unified definition including both cases requires a non-standard modification of the function appearing in the integrand of \eqref{RMCP}, for simplicity we directly consider $N>1$.
\end{remark}

In the following we assume that $M$ is a complete sub-Riemannian manifold, $d$ is the sub-Riemannian distance on $M$, and $\mu$ is a smooth positive measure, i.e., induced by a positive tensor density (or top-dimensional form, if $M$ is orientable), that we denote with the same symbol. The triple $(M,d,\mu)$ is called a \emph{sub-Riemannian metric measure space}. We say that a sub-Riemannian metric measure space $(M,d,\mu)$ satisfies \emph{minimizing Sard property} if for all $x \in M$ we have
\[
\mu(\Abn(x)) = 0, \quad \mbox{where} \quad \Abn(x) \coloneq \{\gamma(1) : \, \gamma \mbox{ abnormal minimizing geodesic starting from } x\}.
\]
For $x\in M$, define the \emph{set of smooth points of $x$} by
\[
\mathcal{S}(x) \coloneq \{y: \, d^2(x, \cdot) \mbox{ is smooth at } y\}.
\]
Under the minimizing Sard property, it follows from \cite[Proposition 2.2]{BR20} that $\mathcal{S}(x)$ has full measure in $M$. As a consequence, $(M,d,\mu)$ is a length space  with negligible cut loci. In particular, if $M$ is ideal, we have $\Abn(x) = \{x\}$ or $\varnothing$ for all $x \in M$ and thus the minimizing Sard property holds automatically. To use the result \cite[Proposition 2.5]{BR20}, we also need to show that $\mathcal{S}(x)$ is \emph{geodesically star-shaped}, namely for any minimizing geodesic $\gamma$ joining $x$ and $y \in \mathcal{S}(x)$, we have $\gamma(s) \in \mathcal{S}(x)$ for all $s \in (0,1]$.

\begin{proposition}\label{pstarshaped}
Assume $M$ is an ideal sub-Riemannian manifold. Then for any $x \in M$ the set $\mathcal{S}(x)$ is geodesically star-shaped.
\end{proposition}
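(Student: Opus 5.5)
We will use the standard characterization of smooth points of the squared distance from \cite{RT05,A09}, also \cite[Theorem 11.8]{ABB20}, which we take as known. Let $M$ be complete and $x\in M$. Then $y\neq x$ belongs to $\mathcal{S}(x)$ if and only if three conditions hold: there is a unique minimizing geodesic $\gamma$ from $x$ to $y$; $\gamma$ is strongly normal, i.e.\ not abnormal; and $y$ is not conjugate to $x$ along $\gamma$. In the ideal case the second condition is automatic for every nontrivial minimizing geodesic, so only uniqueness and non-conjugacy need checking. The point $x$ itself is never smooth in dimension $n\ge 3$ (and the claim only concerns $s\in(0,1]$), so we fix $y\in\mathcal{S}(x)$ with $y\neq x$.

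Let $\gamma:[0,1]\to M$ be the minimizing geodesic from $x$ to $y$, which is unique by the characterization, and fix $s\in(0,1)$; the case $s=1$ is trivial. Put $z=\gamma(s)$. We verify the two remaining conditions at $z$.

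\emph{Uniqueness.} Suppose $\eta$ is a minimizing geodesic from $x$ to $z$ different from the reparametrized restriction $\gamma|_{[0,s]}$. Concatenate $\eta$ with $\gamma|_{[s,1]}$. The resulting curve has length $d(x,z)+d(z,y)=d(x,y)$, so after reparametrization it is a minimizing geodesic from $x$ to $y$. By uniqueness it equals $\gamma$. Hence $\eta$ coincides with $\gamma|_{[0,s]}$ as a curve, which is a contradiction. Note that this argument does not need real-analyticity or any branching considerations; uniqueness at $y$ already forces it.

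\emph{Non-conjugacy.} Since $\gamma$ is strongly normal, it is the projection of the normal extremal $e^{t\vec H}(\lambda_0)$ with $\lambda_0\in T^*_xM$, and $z$ is conjugate along $\gamma$ at time $s$ exactly when $d_{\lambda_0}(\pi\circ e^{s\vec H})$ is singular. We will invoke the standard fact that a normal geodesic containing no nontrivial abnormal segment loses minimality after its first conjugate point; see \cite[Theorem 8.72]{ABB20}, or the Jacobi-curve or second-variation argument of Agrachev. If $z=\gamma(s)$ were conjugate with $s<1$, this fact would say $\gamma|_{[0,1]}$ is not minimizing, contradicting the choice of $\gamma$. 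The hypothesis of the fact is satisfied: every segment $\gamma|_{[s_1,s_2]}$ is itself minimizing, so it is not abnormal because $M$ is ideal. This is exactly where idealness is used beyond the endpoint condition.

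With uniqueness, normality and non-conjugacy established at $z$, the characterization gives $z\in\mathcal{S}(x)$.

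The main obstacle is the non-conjugacy step. We must make sure that the cited theorem on loss of minimality after conjugate points applies in the possibly rank-varying setting of the paper. If needed, we would instead argue directly: since $y\in\mathcal{S}(x)$, the exponential map is a local diffeomorphism near $\lambda_0$ at time $1$. A conjugate time $s<1$ with no abnormal subsegments gives a negative direction of the second variation of $J$ on $[0,1]$ under fixed endpoints, via the Morse index theorem for strongly normal extremals. This contradicts minimality.
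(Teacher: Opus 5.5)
Your argument is correct and follows the paper's route: \cite[Theorem 11.8]{ABB20} reduces the claim to uniqueness and non-conjugacy at $\gamma(s)$, and \cite[Theorem 8.72]{ABB20} (applicable because every subsegment of $\gamma$ is a nontrivial minimizer, hence not abnormal) excludes conjugacy. The only variation is that you rule out a second minimizer to $\gamma(s)$ by concatenation plus uniqueness at $y$, whereas the paper invokes loss of minimality for that case too; one small inaccuracy is that $x\in\mathcal{S}(x)$ can occur in the paper's rank-varying setting (at Riemannian points, in any dimension), but that case is trivial because the only minimizing geodesic from $x$ to $x$ is constant.
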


\begin{proof}
Fix $x \in M$ and $y \in \mathcal{S}(x)$. In the case $y = x$ (it may happen for Riemannian points), the minimizing geodesic $\gamma$ joining $x$ and $y$ is a constant curve, and thus $\gamma(s) = x = y \in \mathcal{S}(x)$ for all $s \in (0,1]$. In the opposite case $y \ne x$, if $\gamma$ is a minimizing geodesic joining $x$ and $y$,  any restriction $\gamma|_{[s_1,s_2]}$ ($0 \le s_1 < s_2 \le 1$) is still a minimizing geodesic with positive constant speed, so by assumption it is not abnormal. As a result, $\gamma$ is normal and contains no non-trivial abnormal segments. Then we argue by contradiction. Assume that there is $s \in (0,1)$ such that $\gamma(s) \notin \mathcal{S}(x)$, then \cite[Theorem 11.8]{ABB20} implies two possibilities: either there are two distinct minimizing geodesics joining $x$ to $\gamma(s)$ or $\gamma|_{[0,s]}$ is the unique minimizing geodesic and $\gamma(s)$ is conjugate to $x$ along $\gamma|_{[0,s]}$. However,  \cite[Theorem 8.72]{ABB20} says that in both cases $\gamma$ loses minimality after the time $s$, which is not our case. 
\end{proof}

\subsection{A differential characterization of the \texorpdfstring{$\MCP$}{MCP}}

For a smooth function $f$, we define the \emph{horizontal gradient}, denoted by $\nabla_{\cH} f$,  by the following formula:
\begin{align}\label{defHG}
df_x(v) = g_x(\nabla_{\cH} f(x), v), \qquad \forall\, v \in \cH_x.
\end{align}
Note that if $\X_1, \ldots, \X_m$ is a generating frame, then we have 
\begin{align}\label{soe3}
\nabla_{\cH} f  = \sum_{i = 1}^m  (\X_i f)  \X_i.    
\end{align}
See \cite[Appendix]{RS23} for a proof in the rank-varying case. Given a smooth density $\mu$ and a smooth vector field $\Y$, we recall that the \emph{$\mu$-divergence of $\Y$} is the smooth function $\mathrm{div}_\mu(\Y)$ defined by
\begin{align}\label{defdiv}
  \mathrm{div}_\mu(\Y) \mu \coloneq   \mathcal{L}_\Y \mu,
\end{align}
where $\mathcal{L}_\Y$ is the Lie derivative in the direction of $\Y$. The \emph{sub-Laplacian associated with $\mu$} is given by 
\begin{align}\label{defLap}
\Delta_\mu f \coloneq \mathrm{div}_\mu(\nabla_{\cH} f).
\end{align}
Now we give a slight generalization of \cite[Proposition 2.5]{BR20}. The difference is that we allow the distribution to be rank-varying and we work with general $K$ instead of $0$. For the sake of completeness, we include a sketch of the proof.

\begin{proposition}\label{pchaMCP}
Assume $(M,d,\mu)$ is a sub-Riemannian metric measure space satisfying minimizing Sard property and the sets of smooth points $\mathcal{S}(x)$ are geodesically star-shaped for all $x \in M$. Let $K \in \R$ and $N > 1$. Then $(M,d,\mu)$ satisfies $\MCP(K,N)$ if and only if for all $x \in M$ we have
\begin{align}\label{echaMCP}
    \frac{1}{2} \Delta_\mu d_x^2 \le 1 + \sqrt{N - 1} d_x  \frac{\mathsf{s}_K'(d_x/\sqrt{N - 1})}{\mathsf{s}_K(d_x/\sqrt{N - 1})} \qquad \mbox{on}  \ \cS(x) ,
\end{align}
where we use the notation $d_x^2(\cdot) \coloneq d^2(x,\cdot)$. Furthermore, $(M,d,\mu)$ satisfies $\MCP_\loc(K,N)$ if and only if  for any $x_0 \in M$ there exists a neighbourhood $\cO$  of $x_0$  such that for all $x \in \cO$ the inequality \eqref{echaMCP} holds on $\cS(x) \cap \cO$ instead of $\cS(x)$. 
\end{proposition}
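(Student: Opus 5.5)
The plan follows the argument of \cite[Proposition 2.5]{BR20}. The idea is to compare the Jacobian of the contraction map $y\mapsto \Phi^x(y,s)$ with the model distortion coefficient. We use the Jacobian equation along geodesics, whose coefficient is exactly $\frac12\Delta_\mu d_x^2$.

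Fix $x$. For $y\in\cS(x)\setminus\{x\}$, the unique minimizing geodesic from $x$ to $y$ is normal and is given by $\Phi^x(y,s)=e^{(s-1)\vec{H}}(d_y(\tfrac12 d_x^2))$. Equivalently, it is the backward flow of the time-dependent vector field built from $\nabla_\cH \tfrac12 d_x^2$. Concretely, for $z=\gamma(s)$ one has $\nabla_\cH(\tfrac12 d_x^2)(z)=s\,\dot\gamma(s)$; the computation in \cite[Appendix]{RS23} gives \eqref{soe3} in the rank-varying case as well. By geodesic star-shapedness (Proposition~\ref{pstarshaped}), every $\Phi^x(y,s)$ with $s\in(0,1]$ lies in $\cS(x)$. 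Hence the map $\phi_s\coloneq\Phi^x(\cdot,s)$ is smooth on the open set $\cS(x)$ and satisfies
\[
\partial_s\phi_s(y)=\tfrac1s\nabla_\cH\big(\tfrac12 d_x^2\big)(\phi_s(y)).
\]
It is also injective, since geodesics do not branch at interior points where they remain minimizing. Set $J(s,y)\coloneq\det_\mu d\phi_s(y)$. Liouville's formula then gives
\[
\partial_s\log J(s,y)=\tfrac1s\,\Delta_\mu\big(\tfrac12 d_x^2\big)(\phi_s(y)),\qquad J(1,y)=1.
\]
By the minimizing Sard property, $\cS(x)$ has full measure. Therefore $\mu(Z_s(x,A))=\int_{A\cap\cS(x)}J(s,y)\,d\mu(y)$.

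\textbf{The inequality \eqref{echaMCP} implies $\MCP(K,N)$.} Write $D=d(x,y)$, so that $d_x(\phi_s(y))=sD$. Let $\beta(s)$ be the model coefficient $s\,[\mathsf{s}_K(sD/\sqrt{N-1})/\mathsf{s}_K(D/\sqrt{N-1})]^{N-1}$. A direct computation gives $\partial_s\log\beta(s)=\frac1s\big(1+\sqrt{N-1}\,sD\,\mathsf{s}_K'/\mathsf{s}_K\big)$, evaluated at $sD/\sqrt{N-1}$. By \eqref{echaMCP} we get $\partial_s\log J\le\partial_s\log\beta$ on $(0,1]$. Since $J(1)=\beta(1)=1$, integrating backward from $s=1$ yields $J(s,y)\ge\beta(s)$. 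Integrating over $A$ gives \eqref{RMCP}.

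\textbf{$\MCP(K,N)$ implies \eqref{echaMCP}.} Take $A=B(y,r)\subset\cS(x)$ and let $r\to0$. By continuity of $J$ and of the model coefficient in $y$, \eqref{RMCP} gives the pointwise bound $J(s,y)\ge\beta(s)$ for all $s\in[0,1]$. Both sides equal $1$ at $s=1$. Comparing left derivatives at $s=1$ gives $\partial_s\log J(1,y)\le\partial_s\log\beta(1)$, which is exactly \eqref{echaMCP} at $y$. The point $y=x$, if it lies in $\cS(x)$, follows by continuity, or trivially, since the right-hand side tends to $N$ there.

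\textbf{The local version.} The same argument applies with $A$ and $x$ restricted to $\cO$. In the reverse direction we need the points $\phi_s(y)$ to stay in $\cO$. To arrange this, shrink to a neighbourhood $\cO'$ whose geodesics between points of $\cO'$ remain in $\cO$, for example a ball of radius a third of that of $\cO$.

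\textbf{Main obstacle.} The delicate step is justifying that $\phi_s$ is smooth and injective on $\cS(x)$, with the Jacobian identity above, in the rank-varying case. This relies on star-shapedness and on the representation of $\nabla_\cH$ via \eqref{soe3}. Once that is in place, the rest is the ODE comparison described above.
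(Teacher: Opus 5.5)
Your proof is correct and is essentially the paper's argument: the paper uses the autonomous flow $\varphi_t$ of $Y=-\frac12\nabla_\cH d_x^2$ with $s=e^{-t}$ and \cite[Proposition B.1]{BSR18} for the Jacobian, which after reparametrization is exactly your time-dependent field $\frac1s\nabla_\cH(\frac12 d_x^2)$ with Liouville's formula, and it uses the same $r/3$-ball shrinking for the local version. One small point: injectivity of $\phi_s$ should be justified by ODE uniqueness for this smooth field on the open set $\cS(x)$, rather than by a general non-branching principle for minimizing geodesics, which is not available in sub-Riemannian geometry; this uses star-shapedness, which is a hypothesis of the proposition and not something you need from Proposition~\ref{pstarshaped}.
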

\begin{proof}
Let $x \in M$ be fixed and set $\Y \coloneq  - \frac{1}{2} \nabla_\cH d_x^2$. Now $\Y$ is a well-defined, smooth vector field on $\cS(x)$. Furthermore, it follows from \cite[Proposition 2]{RT05} that on $\cS(x)$ we have
\[
g(\Y, \Y) = \sum_{i = 1}^m \left( \frac{1}{2} \X_i d_x^2 \right)^2 =  d_x^2.
\]
Using an argument similar to the one in the proof of \cite[Proposition 2.5]{BR20}, we find that if $A \subset \cS(x)$, then
\[
Z_s(x,A) = \varphi_t(A) \quad \mbox{with} \quad s = e^{-t}, t \ge 0,
\]
where $\{\varphi_t\}_{t \ge 0}$ is the flow of $\Y$ on $\cS(x)$. Then \cite[Proposition B.1]{BSR18} says that under the relation $s = e^{-t} \in (0, 1]$ we have
\begin{align}
\mu(\varphi_t(A)) & = \int_A \exp\left[ \int_0^t \mathrm{div}_\mu (\Y)(\varphi_\tau(y)) d\tau \right] d\mu(y) = \int_A \exp\left[ - \frac{1}{2} \int_0^t \Delta_\mu d_x^2 (\varphi_\tau(y)) d\tau \right] d\mu(y)  \label{relationA}
\end{align}
while 
\begin{multline}\label{relationB}
\int_A s \left[ \frac{\mathsf{s}_K(s d(x,y)/\sqrt{N - 1})}{\mathsf{s}_K(d(x,y)/\sqrt{N - 1})} \right]^{N - 1} d\mu(y) \\
=  \int_A 
\exp \left[ - \int_0^t \left( 1 + \sqrt{N - 1} d_x(\varphi_\tau(y))  \frac{\mathsf{s}_K'(d_x(\varphi_\tau(y))/\sqrt{N - 1})}{\mathsf{s}_K(d_x(\varphi_\tau(y))/\sqrt{N - 1})} \right) d\tau   \right] d\mu(y).
\end{multline}
Combining \eqref{relationA} and \eqref{relationB}, we can conclude the proof of the first assertion.  For the local result, the ``only if'' part follows from the same argument on a neighbourhood of a fixed point on which \eqref{RMCP} holds. However, for the ``if'' part, given $x_0 \in M$ and the neighbourhood $\cO$ of $x_0$ on which \eqref{echaMCP} holds on $\cS(x) \cap \cO$, let $r > 0$ such that $B(x_0,r) \subset \cO$ and set $\cO' \coloneq B(x_0,r/3)$. Then for any $x \in \cO'$ and subset $A \subset \cS(x) \cap \cO'$, it holds that
\[
Z_s(x,A) = \varphi_t(A) \subset \cS(x) \cap \cO, \qquad \forall \, s = e^{-t} \in (0,1].
\]
Using \eqref{relationA} and \eqref{relationB} again, we conclude that $\MCP_\loc(K,N)$ holds.
\end{proof}

\begin{remark}\label{rmk:echaMCP2}
Let $K \in \R$ and $N > 1$, the $\MCP(K,N)$ or its local version imply that for any $x \in M$ and any sequence $\{x_j\}_{j \in \N} \subset \cS(x)$ such that $x_j \to x$ as $j \to +\infty$, it holds
\begin{equation}\label{echaMCP2}
\limsup_{j \to +\infty} \Delta_\mu d_x^2(x_j) \le 2 N.
\end{equation}
\end{remark}

\section{Construction of the structure}\label{sE}

We first construct a structure satisfying the assumptions of  the following local version of Theorem \ref{tgol}.

\begin{theorem}\label{tloc}
Let $A\in C^\infty(\mathbb{R})$, and consider on $\mathbb{R}^4$, with
coordinates $(x,y,z,w)$, the sub-Riemannian structure given by the following generating frame:
\begin{equation}\label{framecont}
 \X_1 \coloneq \frac{\partial}{\partial x} + (z + A(y)) \frac{\partial}{\partial w},
 \qquad
 \X_2 \coloneq \frac{\partial}{\partial y},
 \qquad
 \X_3 \coloneq \frac{\partial}{\partial z}.
\end{equation}
Assume that the following conditions hold:
\begin{enumerate}[(i)]
\item \label{assumptioni} $A$ is not affine (i.e.\  $A '' \not\equiv 0$) on any non-trivial closed interval of $\R$;
\item \label{assumptionii} there exists a sequence $\{\tau_j\}_{j \in \N} \subset (0,\infty)$ such that 
\[\tau_j \to 0^+, \qquad A(\tau_j) = 0,\] and
\begin{align}\label{conditionB3}
\frac{\left( \int_0^1 A(\tau_j s) ds \right)^2}{\int_0^1 A(\tau_j s)^2 ds} \to 1^{-}, \qquad \mbox{as} \quad j \to +\infty.
\end{align}
\end{enumerate}
Then the sub-Riemannian structure is complete, equiregular,
of step $2$, and ideal. 

Moreover, for every smooth positive measure $\mu$ on $\mathbb{R}^4$,
the sub-Riemannian metric measure space $(\mathbb{R}^4,d,\mu)$ does not satisfy $\MCP_{\mathrm{loc}}(K,N)$ for any $K\in\mathbb{R}$ and any $N \in (1,\infty)$. In fact, $\MCP_{\mathrm{loc}}(K,N)$ fails in every
neighbourhood of the origin.
\end{theorem}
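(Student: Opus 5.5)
The plan has two parts: first verify the structural properties (complete, equiregular, step $2$, ideal), then produce points $x_j\to 0$ in $\cS(0)$ with $\Delta_\mu d_0^2(x_j)\to+\infty$. This contradicts Remark~\ref{rmk:echaMCP2}, which applies because of Propositions~\ref{pstarshaped} and~\ref{pchaMCP}.

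\textbf{Structural properties.} A direct computation gives $[\X_3,\X_1]=\partial_w$, $[\X_2,\X_1]=A'(y)\partial_w$ and $[\X_2,\X_3]=0$. Hence $\X_1,\X_2,\X_3,[\X_3,\X_1]$ span $T\R^4$ everywhere, so the structure is equiregular of step $2$ with growth vector $(3,4)$.

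For completeness, along any horizontal curve of length $L$ the coordinates $x,y,z$ move by at most $L$. Then $|\dot w|\le (|z|+|A(y)|)|u_1|$ is bounded on bounded sets. So closed $d$-balls are Euclidean-bounded, hence compact.

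For idealness, the annihilator of $\distr$ is spanned by $\lambda=p_w(dw-(z+A(y))dx)$. The standard necessary condition for an abnormal extremal is $\langle\lambda,[\X_i,\sum_j u_j\X_j]\rangle=0$ for all $i$, which comes from differentiating $\langle\lambda(t),\X_i(\gamma(t))\rangle\equiv0$ along the extremal. Since $p_w\neq0$, this gives $u_1=0$ (from $i=3$) and $u_3=-A'(y)u_2$ (from $i=1$). So abnormal curves have $x,w$ constant and $z+A(y)$ constant, i.e.\ they are graphs $z=c-A(y)$ in a plane $\{x,w=\mathrm{const}\}$; if $u_2\equiv 0$ on an interval the curve is trivial there.

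Every curve in such a plane is horizontal, and $d\ge$ Euclidean distance in $(y,z)$. Hence a minimizer between two points of this plane is the straight segment. A non-trivial abnormal minimizer would then force $A$ to be affine on some interval, which is excluded by (\ref{assumptioni}). One should check the sub-segment version too, but the same argument applies to every restriction.

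\textbf{Failure of the MCP.} Set $x_j=(0,\tau_j,0,0)$. The curve $t\mapsto(0,\tau_j t,0,0)$ is horizontal of length $\tau_j$, and $d_0\ge\sqrt{x^2+y^2+z^2}$, so it is minimizing and $d_0^2(x_j)=\tau_j^2$. It is normal with covector $p=(0,\tau_j,0,0)$. I would show $x_j\in\cS(0)$ by checking that this geodesic is the unique minimizer and has no conjugate points for small $\tau_j$, via the explicit Jacobi equation of $H=\tfrac12[(p_x+(z+A(y))p_w)^2+p_y^2+p_z^2]$ along it. Alternatively, I would perturb $x_j$ slightly inside $\cS(0)$, using that $\cS(0)$ has full measure, and use continuity of the estimates below.

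The key inequality $d_0^2\ge x^2+y^2+z^2$, with equality at $x_j$, shows that $d_0^2-(x^2+y^2+z^2)$ has a local minimum at $x_j$. Therefore $\X_i\X_i d_0^2(x_j)\ge \X_i\X_i(x^2+y^2+z^2)(x_j)=2$ for $i=1,2,3$. Writing $\Delta_\mu f=\sum_i\X_i\X_i f+\sum_i \mathrm{div}_\mu(\X_i)\X_i f$, and using $|\X_i d_0^2|\le 2d_0$ together with local boundedness of the divergences, it suffices to prove
\[
\X_1\X_1 d_0^2(x_j)\to+\infty .
\]
Since $A(\tau_j)=0$, we have $\X_1(x_j)=\partial_x$, so this is an upper bound on how slowly $d_0^2$ grows along $\epsilon\mapsto (\epsilon,\tau_j,0,0)+O(\epsilon^2)$.

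I would parametrize the competing curves as $y=\tau_j t+\eta$, $u_1=\epsilon+v$, with $z$ small and the constraint $w(1)=\int_0^1(z+A(y))u_1\,dt=0$. The idea is to prove, by a second-order expansion of the energy, that
\[
d_0^2(\epsilon,\tau_j,0,0)-\tau_j^2\ \ge\ \frac{c\,\epsilon^2}{1-q_j}+o(\epsilon^2),
\qquad
q_j=\frac{\left(\int_0^1 A(\tau_j s)ds\right)^2}{\int_0^1 A(\tau_j s)^2ds}.
\]
To leading order the constraint forces $u_1$ to be $L^2$-orthogonal to $A(\tau_j\cdot)$, up to the corrections from $z$ and $\eta$. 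The minimal $\int u_1^2$ with mean $\epsilon$ and orthogonal to $A(\tau_j\cdot)$ is exactly $\epsilon^2/(1-q_j)$, and $q_j\to 1^-$ by (\ref{assumptionii}).

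\textbf{Main obstacle.} The hard step is controlling the competing use of $z$ (and of the $y$-perturbation $\eta$) to cancel $\int A u_1$, since $z$ costs $\int\dot z^2$ and enters only bilinearly through $\int z u_1$. I would handle this by computing the second variation exactly, equivalently the Hessian of $d_0^2$ at $x_j$ via the Jacobi fields of the normal geodesic. This reduces to a finite-dimensional quadratic minimization in which the relevant weights are $\int A(\tau_j s)ds$ and $\int A(\tau_j s)^2ds$; I would then verify that its value still blows up like $(1-q_j)^{-1}$ times a factor bounded below. Finally, since $x_j\to0$ and the blow-up is in every neighbourhood of the origin, the local version of Remark~\ref{rmk:echaMCP2} is violated for all $K\in\R$ and $N\in(1,\infty)$.
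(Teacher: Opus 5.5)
Your architecture matches the paper's: the same points $(0,\tau_j,0,0)$, the same reduction of $\Delta_\mu d_0^2$ to $X_1^2 d_0^2$, and the same target quantity $1/(1-q_j)$. Your bracket derivation of the abnormal curves is a valid variant of the paper's computation, and so is your bound $X_i^2 d_0^2\ge 2$ from the local minimum of $d_0^2-(x^2+y^2+z^2)$.

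\textbf{The main gap.} The decisive step, the lower bound on $X_1^2 d_0^2(x_j)$, is not proved. You state it with an unspecified constant $c$, call the effect of $z$ and $\eta$ on the constraint the main obstacle, and defer it to a Jacobi-field computation that you do not carry out. The missing idea is that this obstacle is absent at second order. Let $u^\varepsilon$ be a minimizing control from $0$ to $(\varepsilon,\tau,0,0)$. Since $\int_0^1u_2^\varepsilon=\tau$, you have the exact identity
\[
d_0^2(\varepsilon,\tau,0,0)-\tau^2=\|u_1^\varepsilon\|^2+\|u_2^\varepsilon-\tau\|^2+\|u_3^\varepsilon\|^2 .
\]
Now take the competitor $(\varepsilon\omega,\tau,0)$ with $\int_0^1\omega=1$ and $\int_0^1\omega(s)A(\tau s)\,ds=0$. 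It keeps $z\equiv0$ and $y=\tau t$, reaches the target, and shows the left side is at most $\varepsilon^2\|\omega\|^2$.

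Hence $z^\varepsilon$ and $y^\varepsilon-\tau t$ are $O(\varepsilon)$ uniformly, while $\|u_1^\varepsilon\|=O(\varepsilon)$. So the terms $\int z^\varepsilon u_1^\varepsilon$ and $\int (A(y^\varepsilon)-A(\tau t))u_1^\varepsilon$ in the constraint $w(1)=0$ are $O(\varepsilon^2)$. Cancelling the $O(\varepsilon)$ term $\int A(\tau t)u_1^\varepsilon$ with $z$ or $\eta$ would require them to be of order one, at order-one energy cost.

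Divide the constraint by $\varepsilon$ and take a weak limit $\omega$ of $\varepsilon^{-1}u_1^\varepsilon$. This gives $\int\omega=1$ and $\int\omega A(\tau\cdot)=0$, and weak lower semicontinuity gives $\tfrac12X_1^2d_0^2(0,\tau,0,0)\ge\|\omega\|^2\ge1/(1-q)$; the competitor gives equality. In your second-variation language: the normal covector $(0,\tau,0,0)$ has zero $w$-component, so the quadratic part of the end-point map drops out of the Hessian entirely.

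\textbf{A smaller gap: showing $x_j\in\cS(0)$.} Your fallback of perturbing $x_j$ inside $\cS(0)$ does not work. The equality $d_0^2=x^2+y^2+z^2$ and the explicit computation hold only on the null plane $\{x=w=0\}$. Passing second derivatives to the limit would also presuppose the smoothness near $x_j$ that you are trying to establish. The Jacobi-field check is unnecessary. By the same Euclidean lower bound, $t\mapsto tx_j$ remains minimizing for $t\in[0,1+\delta]$. So $x_j$ cannot be conjugate to $0$, since a geodesic without abnormal segments loses minimality past a conjugate point. Uniqueness of the minimizer plus strict normality (from idealness) then give smoothness of $d_0^2$ at $x_j$.
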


We arrange this section as follows: in Subsection \ref{ssnc} we prove Theorem \ref{tloc}; in Subsection \ref{sscA} we construct the key function $A$; in Subsection \ref{ssce}, by taking a  quotient with respect to a group action, we compactify the structure obtained in Subsection \ref{ssnc} completing the proof of Theorem \ref{tgol}.

\subsection{Proof of  Theorem \ref{tloc}}\label{ssnc}

The vector field $\X_3$ has the only purpose of making the structure equiregular and of step $2$, in fact it holds
\[
 [\X_3,\X_1]=\frac{\partial}{\partial w}.
\]

Fix $\xi_1 = (x_1, y_1, z_1, w_1) \in \R^4$.  For a control $u\in L^2([0,1],\R^3)$, the corresponding horizontal curve  $\gamma_u: [0,1] \to \R^4$ is the solution  of \eqref{ODEendpoint}.
So if we write $\gamma_u(t) = (x_u(t),y_u(t),z_u(t),w_u(t))$, we obtain
\begin{align}\label{expgammato}
\begin{cases}
\dot{x}_u(t) = u_1(t), \\
\dot{y}_u(t) = u_2(t), \\
\dot{z}_u(t) = u_3(t), \\
\dot{w}_u(t) = (z_u(t) + A(y_u(t))) u_1(t) .
\end{cases}
\end{align}
Consequently, it holds
\begin{align}\label{expgammato2}
\begin{cases}
{x}_u(1) = \displaystyle  \int_0^1 u_1(s) ds + x_1, \\
{y}_u(1) = \displaystyle  \int_0^1 u_2(s) ds + y_1, \\
{z}_u(1) = \displaystyle  \int_0^1 u_3(s) ds + z_1, \\
{w}_u(1) =  \displaystyle \int_0^1 \left[\int_0^t u_3(s) ds + z_1 + A\left(\int_0^t u_2(s) ds + y_1\right)\right] u_1(t) dt + w_1.
\end{cases}.
\end{align}

\begin{lemma}\label{lemcaldis}
Consider the sub-Riemannian structure determined by \eqref{framecont}. For any two points $\xi_1 = (x_1,y_1,z_1,w_1), \xi_2 = (x_2, y_2, z_2, w_2) \in  \R^4$ with $x_1 = x_2$ and $w_1 = w_2$, we have $d^2(\xi_1, \xi_2) = |y_1 - y_2|^2 + |z_1 - z_2|^2$ and the only minimizing geodesics are straight segments between such endpoints, parametrized with constant speed.
\end{lemma}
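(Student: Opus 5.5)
Since the energy of a curve with control $u$ is $J(u)=\frac12\int_0^1 |u|^2$, the plan is to bound the energy from below using only the $y$ and $z$ components. By \eqref{expgammato2}, any control steering $\xi_1$ to $\xi_2$ satisfies $\int_0^1 u_2 = y_2-y_1$ and $\int_0^1 u_3 = z_2-z_1$. Cauchy--Schwarz then gives
\[
2J(u) = \int_0^1 (u_1^2+u_2^2+u_3^2)\,dt \ \ge\ \int_0^1 u_2^2 + \int_0^1 u_3^2 \ \ge\ |y_2-y_1|^2 + |z_2-z_1|^2 ,
\]
so \eqref{CauchProblem} yields $d^2(\xi_1,\xi_2)\ge |y_1-y_2|^2+|z_1-z_2|^2$.

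For the reverse inequality, take the constant control $u=(0,\,y_2-y_1,\,z_2-z_1)$. Because $u_1\equiv 0$, \eqref{expgammato2} gives $x_u(1)=x_1=x_2$ and $w_u(1)=w_1=w_2$. Hence the straight segment joins $\xi_1$ to $\xi_2$, and its energy is exactly half the lower bound. This proves the distance formula, and it shows that the segment is a minimizing geodesic.

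For uniqueness, suppose $u$ is a minimizing control. Then every inequality in the chain above must be an equality. Equality in the first step forces $u_1=0$ a.e. Equality in Cauchy--Schwarz forces $u_2$ and $u_3$ to be a.e.\ constant, with values $y_2-y_1$ and $z_2-z_1$. So $u$ is the constant control above, and integrating \eqref{expgammato} shows that $\gamma_u$ is the constant-speed straight segment. Since minimizers are parametrized on $[0,1]$ with constant speed, this covers all minimizing geodesics.

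One point needs care: $J(u)=\frac12\int|u|^2$ agrees with $\mathcal J(\gamma_u)$ only when $|u|^2=g(\dot\gamma,\dot\gamma)$. In general $g(\dot\gamma,\dot\gamma)\le |u|^2$, and \eqref{CauchProblem} involves $g$. Here this causes no problem: $\X_1,\X_2,\X_3$ are pointwise linearly independent (their $x,y,z$ components already are), so the representation $\dot\gamma=\sum u_i\X_i$ is unique and $g(\dot\gamma,\dot\gamma)=|u|^2$ exactly. The argument is therefore elementary, and this is the only step that needs explicit justification.
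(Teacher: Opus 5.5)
Your proof is correct and is essentially the paper's argument. The paper projects onto the Euclidean $(x,y,z)$-dynamics and uses that Euclidean minimizers are constant-speed straight segments; your Cauchy--Schwarz chain, together with its equality case, is the unpacked form of that projection argument. Your remark on $g(\dot\gamma,\dot\gamma)=|u|^2$ is also correct, although it is not strictly necessary: $d^2=\inf\|u\|^2_{L^2}$ holds for any generating frame if one takes minimal-norm controls.
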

\begin{proof}
 We note that the projection of $\gamma_u = (x_u,y_u,z_u,w_u)$ on the first three variables follows an Euclidean dynamics on $\R^3$. More precisely, if $\t{\gamma}_u \coloneq (x_u,y_u,z_u)$, then such curve satisfies
\[
\dot{\t{\gamma}}_u = \sum_{i = 1}^3 u_i(t) \Y_i(\t{\gamma}_u(t)), \qquad \mbox{for a.e. }  t \in [0,1], \qquad \t{\gamma}_u(0) = (x_1,y_1,z_1),
\]
where 
\[
\Y_1 \coloneq \frac{\partial}{\partial x}, \qquad  \Y_2 \coloneq \frac{\partial}{\partial y}, \qquad \Y_3 \coloneq \frac{\partial}{\partial z}.
\]
It follows from \eqref{CauchProblem} that 
\begin{align}
d^2(\xi_1,\xi_2) & = \inf \{\|u\|^2_{L^2([0,1],\R^3)}:  \, \gamma_u(0) = \xi_1,\; \gamma_u(1) = \xi_2\} \\
& \ge  \inf \{\|u\|^2_{L^2([0,1],\R^3)}:   \, \t{\gamma}_u(0) = (x_1,y_1,z_1),\; \t{\gamma}_u(1) = (x_2,y_2,z_2)\}\\
& = |y_1 - y_2|^2 + |z_1 - z_2|^2,
\end{align}
 where we used that $x_1=x_2$. To prove the opposite inequality, by the special choice of control $u^*_{\xi_1,\xi_2}(t) \coloneq (0,y_2 - y_1, z_2 - z_1)$, we know $\gamma_{u^*_{\xi_1,\xi_2}}(1) = \xi_2$ by \eqref{expgammato2} and thus 
\[
d^2(\xi_1,\xi_2) \le |y_1 - y_2|^2 + |z_1 - z_2|^2.
\]
Since on $\R^3$ the only minimizing geodesics are straight lines with constant speed, we proved the last assertion.
\end{proof}

\begin{lemma}\label{lcompl}
For every $A\in C^\infty(\R)$, the sub-Riemannian structure given by \eqref{framecont} is complete.
\end{lemma}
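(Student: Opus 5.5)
We will prove completeness by showing that closed sub-Riemannian balls are compact; the Hopf--Rinow theorem for length spaces then gives that $(\R^4,d)$ is complete. Fix a base point $\xi_1=(x_1,y_1,z_1,w_1)$ and $R>0$, and take any point $\xi_2$ with $d(\xi_1,\xi_2)\le R$. By \eqref{CauchProblem}, $\xi_2=\gamma_u(1)$ for some control $u$ with $\|u\|_{L^2([0,1],\R^3)}\le R+1$. The aim is to bound every coordinate of $\gamma_u(t)$, for $t\in[0,1]$, by a constant depending only on $\xi_1$ and $R$ (and on $A$).

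The first three coordinates are controlled by the Euclidean projection already used in Lemma~\ref{lemcaldis}. From \eqref{expgammato2} and Cauchy--Schwarz we get
\[
|x_u(t)-x_1|,\ |y_u(t)-y_1|,\ |z_u(t)-z_1|\le \|u\|_{L^1}\le \|u\|_{L^2}\le R+1 .
\]
Since $A$ is continuous, it is bounded by some constant $C_R$ on the compact interval $[y_1-R-1,\,y_1+R+1]$. The last equation of \eqref{expgammato} then gives
\[
|\dot w_u(t)|\le \bigl(|z_1|+R+1+C_R\bigr)\,|u_1(t)|,
\]
and integrating yields $|w_u(t)-w_1|\le (|z_1|+R+1+C_R)(R+1)$.

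This bound also settles a point that must be handled first: a priori a solution of \eqref{ODEendpoint} could blow up before time $1$. The estimates show that every solution stays in a fixed compact set on its interval of existence, so it extends to all of $[0,1]$. Thus $\mathcal U_{\xi_1}=L^2$, although this is not even needed for the argument.

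Consequently the closed ball $\bar B(\xi_1,R)$ is contained in a Euclidean compact set. It is closed, because $d$ is continuous by the Rashevskii--Chow theorem, and therefore it is compact. A length space (or, more directly, a metric space) whose closed balls are compact is complete: a Cauchy sequence is bounded, so it lies in some compact closed ball and has a convergent subsequence, hence it converges.

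The only delicate point is the one noted above: $A$ may grow arbitrarily fast, so no global growth condition on the vector fields is available. The argument avoids this because $A$ is evaluated only along $y_u$, which stays in a bounded interval. That is why completeness holds for every $A\in C^\infty(\R)$.
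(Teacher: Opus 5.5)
Your proof is correct and follows essentially the same route as the paper. Both arguments bound the $x,y,z$ displacements by the $L^1$ norm of a near-optimal control, bound $w$ using the maximum of $|A|$ on the compact $y$-interval that is reached, and conclude that closed balls are compact. The differences are cosmetic: the paper uses the threshold $\|u\|_{L^1}<2r$ rather than $R+1$, and it cites \cite[Proposition 3.47]{ABB20} for the final step, whereas you prove directly that a metric space with compact closed balls is complete.
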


\begin{proof}
In the  proof we will use $\l{B}(\xi,r)$ to denote the closed sub-Riemannian ball centered at $\xi$ with radius $r$ (recall that by the Rashevskii--Chow theorem the metric topology and the manifold topology coincide). Fix $\xi_1 = (x_1,y_1,z_1,w_1) \in \R^4$ and $r > 0$. For any $\xi_2 = (x_2,y_2,z_2,w_2) \in \l{B}(\xi_1,r)$, it follows that there exists $u \in L^2([0,1],\R^3)$ such that $\gamma_u(1) = \xi_2$ and $\|u\|_{L^1([0,1],\R^3)} < 2r$ . As a consequence, it follows from \eqref{expgammato2} that 
\[
|x_1 - x_2| \le 2r, \quad |y_1 - y_2| \le 2r, \quad |z_1 - z_2| \le 2r, \quad |w_1 - w_2| \le 2r \left( |z_1| + 2r + \max_{y \in [y_1 - 2r,y_1 + 2r]} |A(y)| \right),
\]
which implies that $\l{B}(\xi_1,r)$ is contained in some bounded set of $\R^4$ and thus it is compact. Then the completeness of $(\R^4,d)$ follows from \cite[Proposition 3.47]{ABB20}.    
\end{proof}

\begin{proposition}\label{pideal}
If $A \in C^\infty(\R)$ is not affine on any non-trivial closed interval,  then $\R^4$ equipped with the sub-Riemannian structure given by \eqref{framecont} is ideal.  
\end{proposition}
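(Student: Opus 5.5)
Completeness is already given by Lemma~\ref{lcompl}, so we only have to show that there are no non-trivial abnormal minimizing geodesics. The plan is to characterize abnormal extremals via the Pontryagin maximum principle and then show, using assumption \eqref{assumptioni}, that any non-constant abnormal curve cannot be minimizing.

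\emph{Abnormal equations.} Write covectors as $\lambda=(p_x,p_y,p_z,p_w)$ and introduce the Hamiltonians
\[
h_1=p_x+(z+A(y))p_w,\qquad h_2=p_y,\qquad h_3=p_z.
\]
An abnormal extremal is a non-zero Lipschitz curve $\lambda(t)$ along $\gamma_u$ with $h_1=h_2=h_3\equiv0$, evolving by $\dot\lambda=\sum_i u_i\vec h_i(\lambda)$. Since $w$ is cyclic, $p_w$ is constant. Differentiating the constraints gives:
\begin{itemize}
\item $h_3\equiv 0$ yields $0=\dot h_3=\{h_1,h_3\}u_1=-p_wu_1$;
\item $h_2\equiv0$ yields $0=\dot h_2=\{h_1,h_2\}u_1=-A'(y)p_wu_1$.
\end{itemize}
If $p_w=0$, the constraints force $p_x=p_y=p_z=0$, contradicting $\lambda\neq0$. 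Hence $p_w\neq0$, so $u_1\equiv0$ a.e.

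\emph{Shape of abnormal curves.} With $u_1\equiv 0$, the components $x$ and $w$ stay constant along $\gamma$. The remaining condition is $h_1=p_x+(z+A(y))p_w\equiv 0$, that is, $z(t)+A(y(t))\equiv c$ constant. Thus any abnormal curve lies in $\{x=x_1,\,w=w_1\}$ and satisfies $z=c-A(y)$.

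\emph{Non-minimality.} Suppose $\gamma$ is a non-trivial abnormal minimizing geodesic. Its endpoints share the same $x$ and $w$ coordinates. By Lemma~\ref{lemcaldis}, the unique minimizers between such points are straight segments in the $(y,z)$-plane, parametrized with constant speed. So $\gamma$ is a non-constant straight segment contained in the curve $z=c-A(y)$. There are two cases:
\begin{itemize}
\item If $y$ is non-constant along the segment, then $A$ is affine on a non-trivial closed interval of $y$-values, contradicting \eqref{assumptioni}.
\item If $y$ is constant, then $z=c-A(y)$ is also constant, so $\gamma$ is constant, again a contradiction.
\end{itemize}
The same argument applies to any sub-segment of $\gamma$. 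Hence no non-trivial abnormal minimizing geodesic exists.

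The main point requiring care is deriving $u_1\equiv0$ from the characteristic equations in the $L^2$-control setting. Since $\lambda$ is absolutely continuous, the constrained functions $h_2,h_3$ can be differentiated a.e. along $\gamma$, and this is all the argument needs.
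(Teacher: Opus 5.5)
Your proof is correct and follows essentially the same route as the paper. The paper works directly with $\langle\lambda, dE_{\xi_1}(u)\rangle=0$, which yields exactly your constraints $h_1=h_2=h_3=0$ along the curve, with $\lambda_4=p_w$ and $\lambda_1=p_x$; the latter is constant because $x$ is also cyclic, which you use implicitly to get $z+A(y)\equiv c$. As you do, it then deduces $\lambda_4\neq0$ and $u_1=0$, and concludes via Lemma~\ref{lemcaldis} and the dichotomy between $A$ being affine and the curve being constant.
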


\begin{proof}
The completeness follows from Lemma \ref{lcompl}. Now we prove that there are no non-trivial abnormal minimizers. Assume $\gamma_u = (x_u,y_u,z_u,w_u)$ is an abnormal minimizing geodesic joining $\xi_1 = (x_1,y_1,z_1,w_1)$ and $\xi_2 = (x_2,y_2,z_2,w_2)$. By definition there exists $\lambda  = (\lambda_1, \lambda_2, \lambda_3,\lambda_4) \in \R^4 \setminus \{0\}$ such that
\begin{align}\label{abnormal}
\la \lambda, dE_{\xi_1}(u) \ra = 0.    
\end{align}
Using \eqref{expgammato2}, we can calculate the differential of the end-point map $E_{\xi_1}$ at $u$:
\begin{align}\label{expdiffend}
d E_{\xi_1}(u) v = \begin{pmatrix}
\int_0^1 v_1(s) ds \\
\int_0^1 v_2(s) ds \\
\int_0^1 v_3(s) ds \\
\int_0^1 [z_u(s) + A(y_u(s))] v_1(s) ds
+ \int_0^1 u_1(t) \left[ \int_0^t v_3(s) ds + A'(y_u(t)) \int_0^t v_2(s) ds \right] dt
\end{pmatrix}
\end{align}
for all $v \in L^2([0,1],\R^3)$. Combining \eqref{abnormal} with \eqref{expdiffend}, we obtain
\begin{align}\label{expdiffend2}
\begin{cases}
   \lambda_1 + \lambda_4 (z_u(s) + A(y_u(s)) ) = 0, \\
    \lambda_2 + \lambda_4 \int_s^1 A'(y_u(t)) u_1(t) dt = 0, \\
    \lambda_3 + \lambda_4 \int_s^1 u_1(t) dt  = 0,
\end{cases} \qquad \qquad \mbox{for a.e.} \ s \in [0,1].
\end{align}
To be more precise, to obtain the third equation in \eqref{expdiffend2}, we choose $v = (0,0,v_3)$ with $v_3 \in L^2([0,1])$. Then  \eqref{abnormal}, together with \eqref{expdiffend}, gives
\begin{align*}
 \int_0^1 \left( \lambda_3 + \lambda_4 \int_s^1 u_1(t) dt \right) v_3(s) ds 
 =    \lambda_3 \int_0^1 v_3(s) ds + \lambda_4 \int_0^1 \int_0^t u_1(t)  v_3(s) ds dt = 0.
\end{align*}
Since $v_3$ is arbitrary, we obtain the third equation in \eqref{expdiffend2}. The other two equations follow by a similar argument. Now from \eqref{expdiffend2} we know that $\lambda_4 \ne 0$. Otherwise \eqref{expdiffend2} yields $\lambda_1 = \lambda_2 = \lambda_3 = 0$ and thus $\lambda = 0$, which leads to a contradiction. Dividing both sides of \eqref{expdiffend2} by $\lambda_4$, we know from the third equation of \eqref{expdiffend2} that $u_1 = 0$. Inserting this into \eqref{expgammato2} we obtain that $x_2 = x_1$ and $w_2 = w_1$. It follows from Lemma \ref{lemcaldis} that $u = u^*_{\xi_1,\xi_2} = (0,y_2 - y_1, z_2 - z_1)$ since $\gamma_u$ is minimizing and 
\[
y_u(s) = y_1 + s(y_2 - y_1), \qquad z_u(s) = z_1 + s (z_2 - z_1).
\]
Inserting these into the first equation of \eqref{expdiffend2} we get
\begin{align}\label{expdiffend3}
\lambda_1 + \lambda_4 (z_1 + s (z_2 - z_1) + A(y_1 + s(y_2 - y_1))) = 0, \qquad \mbox{for a.e.} \ s \in [0,1].    
\end{align}
There are two possibilities: either $y_2 = y_1$ and \eqref{expdiffend3} implies $z_2 = z_1$, or $y_2 \ne y_1$ and \eqref{expdiffend3} implies $A$ is an affine function on some non-trivial closed interval of $\R$. Our assumption rules out the second possibility so we must have $\xi_1 = \xi_2$ and $\gamma_u$ is a constant curve. In conclusion, we proved that whenever we have an abnormal minimizing geodesic, it must be the trivial one.
\end{proof}

\begin{corollary}\label{csmooth}
If $A  \in C^\infty(\R)$ is not affine on any non-trivial closed interval of $\R$, then for the sub-Riemannian structure determined by \eqref{framecont}, we have $\{0\} \times (\R^2 \setminus\{0\}) \times \{0\} \subset \mathcal{S}(0)$. 
\end{corollary}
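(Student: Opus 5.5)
Fix a target point $\xi=(0,y,z,0)$ with $(y,z)\neq(0,0)$. To show $d_0^2$ is smooth at $\xi$, I will use the characterization of smooth points in \cite[Theorem~11.8]{ABB20}. On an ideal structure, $\xi\in\mathcal{S}(0)$ provided three things hold: the minimizing geodesic from $0$ to $\xi$ is unique, it is not abnormal, and $\xi$ is not conjugate to $0$ along it.

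By Lemma~\ref{lemcaldis}, the unique minimizer is the straight segment $\gamma(s)=(0,sy,sz,0)$ with control $u^*=(0,y,z)$. By Proposition~\ref{pideal}, the structure is ideal, so $\gamma$, being a non-trivial minimizer, is strictly normal. It remains to rule out conjugacy.

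\textbf{Step 1: the normal covector.} Since $\gamma$ is normal, I compute its covector by solving $\langle\lambda,dE_0(u^*)\rangle=dJ(u^*)=\langle u^*,\cdot\rangle_{L^2}$ using \eqref{expdiffend} with $u_1^*=0$. This gives
\[
\lambda_1+\lambda_4\bigl(sz+A(sy)\bigr)=0, \qquad \lambda_2=y, \qquad \lambda_3=z .
\]
The first equation must hold for a.e.\ $s$. Differentiating in $s$ gives $\lambda_4\bigl(z+yA'(sy)\bigr)=0$. If $\lambda_4\neq0$, then $A$ would be affine on the interval between $0$ and $y$ when $y\neq0$, and $z=0$ when $y=0$; both contradict the hypotheses. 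Hence $\lambda_4=0$ and $\lambda_1=0$.

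\textbf{Step 2: non-conjugacy.} I will show directly that the second variation is positive definite on $\ker dE_0(u^*)$. This is the criterion for non-conjugacy at a strictly normal point. Since $E_0$ is smooth and $\lambda_4=\lambda_1=0$, the Hessian of $J-\langle\lambda,E_0\rangle$ restricted to the kernel is $\|v\|^2-\langle\lambda,D^2E_0(u^*)(v,v)\rangle$. The second derivative of $E_0$ lives only in the $w$-component, because the $x,y,z$ components are linear in $u$. Its pairing with $\lambda$ therefore vanishes, since $\lambda_4=0$. So the quadratic form is $\|v\|^2>0$ for $v\neq0$. Equivalently, the exponential map has no conjugate points along this geodesic, since the normal extremal with $\lambda_4=0$ behaves like a Euclidean line in $(y,z)$.

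\textbf{Step 3: conclusion.} With a unique, strictly normal, non-conjugate minimizer, \cite[Theorem~11.8]{ABB20} gives $\xi\in\mathcal{S}(0)$.

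The main obstacle I expect is making the non-conjugacy step rigorous in the form required by \cite[Theorem~11.8]{ABB20}. If the Hessian argument proves awkward, the fallback is to write the Hamiltonian flow from $\lambda=(0,y,z,0)$ and compute the Jacobian of the exponential map explicitly. In that computation, the $p_w$-direction contributes terms governed by $\int(sz+A(sy))\,ds$-type integrals. Their nondegeneracy again follows from $A$ not being affine, via a Cauchy--Schwarz strictness argument.
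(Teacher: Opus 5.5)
Your proof is correct. It differs from the paper's only in how you establish non-conjugacy.

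\textbf{The paper's route.} The paper computes nothing. If $\xi$ were conjugate to $0$ along the segment, then \cite[Theorem 8.72]{ABB20} would force the geodesic to lose minimality right after time $1$. This contradicts Lemma \ref{lemcaldis}, which shows that $s\mapsto(0,sy,sz,0)$ is still minimizing on $[0,1+\epsilon]$.

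\textbf{Your route.} You compute the unique normal covector $\lambda=(0,y,z,0)$. You then note that $\lambda_4=0$ kills the only nonlinear component of $E_0$, so the second variation on $\ker dE_0(u^*)$ is $\|v\|^2$. This step uses the characterization of conjugate points of strictly normal extremals as degeneracy of the second variation. You should cite that statement precisely from \cite[Ch.~8]{ABB20}, since it is the one external input replacing the paper's use of Theorem 8.72.

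\textbf{Comparison.} The paper's argument is shorter and needs only that the segment extends as a minimizer. Yours is local and says more: there are no conjugate points at any time along the segment. Your fallback makes the mechanism explicit. Varying the initial covector by $(a,b,c,e)$ gives $(\delta y,\delta z)(1)=(b,c)$ and $(\delta x,\delta w)(1)=\bigl(a+e\int_0^1 f,\ a\int_0^1 f+e\int_0^1 f^2\bigr)$, where $f(s)=sz+A(sy)$. The resulting determinant $\int_0^1 f^2-\bigl(\int_0^1 f\bigr)^2$ is the same Cauchy--Schwarz gap that drives Lemma \ref{lemcalX12}.

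\textbf{One caution.} Your remark that the extremal ``behaves like a Euclidean line'' should not be read as a justification. The $(p_x,p_w)$-Jacobi fields are non-trivial, and their non-degeneracy is precisely the non-affinity of $A$. In your Hessian route, that non-affinity enters through strict normality and the resulting vanishing of $\lambda_4$.
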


\begin{proof}
We use \cite[Theorem 11.8]{ABB20}. Set $\xi_1 = 0$ and $\xi_2 = (0,y_2,z_2,0) \ne 0$. Lemma \ref{lemcaldis} implies that the straight line $[0,1]\ni t \mapsto t \xi_2$ is the unique minimizing geodesic between these points. From our assumption on the function $A$, together with Proposition \ref{pideal}, we know the minimizing geodesic contains no non-trivial abnormal segments, so a fortiori it is strictly normal (i.e.\ it is not abnormal). So it is sufficient to prove that  $\xi_2$ is not conjugate to $\xi_1$ along the minimizing geodesic. We argue by contradiction. Assume on the contrary that $\xi_2$ is conjugate to $\xi_1$ along the minimizing geodesic, \cite[Theorem 8.72]{ABB20} says that the minimizing geodesic loses minimality after the time $1$, which is not the case  for this straight line.
\end{proof}

In the following we use $\mathscr{L}$ to denote the Lebesgue measure  on $\R^4$. For any smooth positive measure $\mu$, there is a smooth function $V$ such that $\mu =e^{-V}\mathscr{L}$.  Then the sub-Laplacian associated with $\mu$ in this case (recalling \eqref{defLap}) is given by
\begin{align}\label{defsubLap}
\Delta_\mu = \X_1^2 + \X_2^2 + \X_3^2 - (\X_1 V)  \X_1 -  (\X_2 V)  \X_2 - (\X_3 V) \X_3.
\end{align}
In particular, we have 
\begin{align}\label{defsubLap2}
\Delta_\mathscr{L} = \X_1^2 + \X_2^2 + \X_3^2.
\end{align}
 Our aim is to prove that $\Delta_\mu d_0^2$ is unbounded near $0$, to prove, via Proposition \ref{pchaMCP} and Remark \ref{rmk:echaMCP2}, that $\MCP_{\mathrm{loc}}$ must fail. To this purpose, we compute $\Delta_\mu d_0^2$ at points $(0,\tau,0,0)$ with $\tau > 0$. 

\begin{lemma}\label{lemcalX12}
For the sub-Riemannian structure determined by \eqref{framecont}, assume that $d_0^2$ is smooth near $(0,\tau,0,0)$ with $\tau > 0$, and $A(\tau) = 0$ but $A \not\equiv 0$  on $[0,\tau]$. Then we have 
\begin{equation}\label{eq:denominator}
\X_1^2 d^2_0 ((0,\tau,0,0)) = 2 \, \frac{\int_0^1 A(\tau s)^2 ds}{\int_0^1 A(\tau s)^2 ds - \left( \int_0^1 A(\tau s) ds \right)^2},
\end{equation}
and 
\begin{align}\label{calX123}
\X_2^2 d^2_0 ((0,\tau,0,0)) = \X_3^2 d^2_0 ((0,\tau,0,0)) = 2.
\end{align}
 Note that by our assumptions and Cauchy--Schwarz inequality, the denominator in \eqref{eq:denominator} is positive.
\end{lemma}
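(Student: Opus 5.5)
The plan is to compute each $\X_i^2 d_0^2$ at $p_\tau \coloneq (0,\tau,0,0)$ as a second derivative along the integral curve of $\X_i$ through $p_\tau$. The formula \eqref{eq:denominator} should then come out of a finite-dimensional constrained least-squares problem.

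\textbf{The easy identities \eqref{calX123}.} The flows of $\X_2$ and $\X_3$ from $p_\tau$ are $(0,\tau+t,0,0)$ and $(0,\tau,t,0)$. Both endpoints have $x=w=0$, so Lemma \ref{lemcaldis} gives
\[
d_0^2 = (\tau+t)^2 \quad\text{and}\quad d_0^2 = \tau^2+t^2
\]
respectively. Differentiating twice at $t=0$ gives \eqref{calX123}.

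\textbf{Reduction for $\X_1$.} The flow of $\X_1$ from $p_\tau$ satisfies $\dot w = z + A(y) = A(\tau)=0$, so it is $t\mapsto (t,\tau,0,0)$. Set $f(t)\coloneq d_0^2((t,\tau,0,0))$. Since $d_0^2$ is smooth near $p_\tau$, $f$ is smooth, with $f(0)=\tau^2$ and $f(-t)=f(t)$. The latter holds because $(x,w,u_1)\mapsto(-x,-w,-u_1)$ preserves the structure. Hence it suffices to show
\[
f(t)=\tau^2 + t^2 Q_\tau + o(t^2), \qquad Q_\tau \coloneq \frac{\int_0^1 A_\tau^2}{\int_0^1 A_\tau^2-\left(\int_0^1 A_\tau\right)^2}, \qquad A_\tau(s)\coloneq A(\tau s).
\]
This gives $\X_1^2 d_0^2(p_\tau) = f''(0) = 2Q_\tau$. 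Note that $Q_\tau$ is exactly $\min\{\int_0^1 a^2 : \int_0^1 a = 1,\ \int_0^1 A_\tau a = 0\}$. This is an orthogonal projection computation in $L^2(0,1)$: one minimizes $\|a\|^2$ on an affine subspace of codimension $2$, which is nondegenerate because $A_\tau$ is not constant by Cauchy--Schwarz and the hypotheses.

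\textbf{Upper bound.} I take the optimal $a$ from the quadratic problem and consider controls of the form $u=(ta,\ \tau+v_2,\ v_3)$ with small corrections $v_2,v_3$ and a small correction in $u_1$. By Proposition \ref{pideal}, the straight control $(0,\tau,0)$ is strictly normal, so $dE_0$ is surjective there. The implicit function theorem then lets me choose corrections of size $O(t^2)$ so that the endpoint is exactly $(t,\tau,0,0)$. The reason is that the $w$-component is $t\int_0^1 A_\tau a + O(t^2) = O(t^2)$. The cost is
\[
\|u\|^2 = \tau^2 + t^2\int_0^1 a^2 + o(t^2),
\]
because the cross term $2\tau\int_0^1 v_2 = 0$ is forced by the $y$-constraint. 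This gives $f(t)\le \tau^2 + t^2Q_\tau + o(t^2)$.

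\textbf{Lower bound (main obstacle).} Let $u^t$ be a minimizing control for $(t,\tau,0,0)$. By uniqueness of the minimizer to $p_\tau$ (Lemma \ref{lemcaldis}) and compactness, $u^t\to(0,\tau,0)$ in $L^2$ as $t\to0$. Write $y(s)=\tau s+\eta(s)$, so that $\|\eta\|_\infty,\|z\|_\infty\to0$, and note $\int_0^1 u_2=\tau$. Then
\[
\|u^t\|^2 = \tau^2 + \|u_1\|^2 + \|\dot\eta\|^2 + \|\dot z\|^2, \qquad \int_0^1 u_1 = t.
\]
The $w$-constraint reads
\[
\int_0^1 A_\tau u_1 = -\int_0^1 \bigl(z + A(y)-A(\tau s)\bigr)u_1 = O\bigl((\|\eta\|_\infty+\|z\|_\infty)\|u_1\|\bigr) = o(\|u_1\|).
\]
The upper bound gives $\|u_1\|=O(t)$. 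Setting $a\coloneq u_1/t$, we get $\int_0^1 a=1$ and $\int_0^1 A_\tau a = o(1)$. By continuity of the projection problem in the right-hand side of the constraints, $\|a\|^2\ge Q_\tau - o(1)$. Hence $f(t)\ge \tau^2 + t^2 Q_\tau - o(t^2)$.

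The delicate point is to make the $o(\cdot)$ estimate on the $w$-constraint honest. This needs uniform control of $\eta$ and $z$ via $\|\dot\eta\|_{L^2},\|\dot z\|_{L^2}\to0$, together with local Lipschitz continuity of $A$. Once that is in place, the two bounds combine to give \eqref{eq:denominator}.
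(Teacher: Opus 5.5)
Your proposal is correct and follows essentially the paper's proof. It uses the same reduction to $f''(0)$ along $t\mapsto(t,\tau,0,0)$, the same test controls $(ta,\tau,0)$ for the upper bound, and the same rescaling $u_1/t$ of minimizers for the lower bound; the only real difference is that the paper extracts a weak limit $\omega$ satisfying the constraints exactly, whereas you use near-admissibility plus continuity of the finite-dimensional value function, which works equally well. No implicit-function correction is needed in the upper bound, since $(ta,\tau,0)$ already reaches $(t,\tau,0,0)$ exactly when $\int_0^1 a=1$ and $\int_0^1 A_\tau a=0$; this also avoids citing Proposition~\ref{pideal}, whose non-affineness hypothesis is not assumed in this lemma (surjectivity of $dE_0$ at $(0,\tau,0)$ does follow directly from $A_\tau$ being nonconstant).
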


\begin{proof}
From Lemma \ref{lemcaldis} it holds
\[
d^2_0((0,\tau + \varepsilon, 0, 0)) = |\tau + \varepsilon|^2, \qquad d^2_0((0,\tau, \varepsilon, 0)) = \tau^2 + \varepsilon^2,
\]
which imply \eqref{calX123}. We are left to compute $\X_1^2 d^2_0 ((0,\tau,0,0))$.

Since $d_0^2$ is smooth near $(0,\tau,0,0)$ and $A(\tau) = 0$, by inspection of \eqref{framecont} we have 
\[
\X_1^2 d^2_0 ((0,\tau,0,0)) =   \partial_x^2 d^2_0 ((0,\tau,0,0)) .
\]
By Taylor expansion, it is  then sufficient to consider the following limit of second-order difference:
\begin{align}\label{limit2nd}
\X_1^2 d^2_0 ((0,\tau,0,0)) = \lim_{\varepsilon \to 0^+} \frac{d^2_0 ((\varepsilon,\tau,0,0)) + d^2_0 ((-\varepsilon,\tau,0,0)) - 2 d^2_0 ((0,\tau,0,0))}{\varepsilon^2}.
\end{align}
Observing that if $\gamma_u$ is a curve that joins $\xi_1=0$ to $\xi_2=(\varepsilon,\tau,0,0)$, then $\gamma_{\t{u}}$ with $\t{u} = (-u_1, u_2, u_3)$ is a curve that joins $0$ to $(-\varepsilon,\tau,0,0)$. So we must have $d^2_0 ((\varepsilon,\tau,0,0)) = d^2_0 ((-\varepsilon,\tau,0,0))$. Inserting this into \eqref{limit2nd}, we obtain
\begin{align}\label{calX12}
\X_1^2 d^2_0 ((0,\tau,0,0)) = 2 \lim_{\varepsilon \to 0^+} \frac{d^2_0 ((\varepsilon,\tau,0,0))  -  d^2_0 ((0,\tau,0,0))}{\varepsilon^2}.
\end{align}
For any fixed $\varepsilon > 0$, from \eqref{CauchProblem} (together with the completeness obtained from Lemma \ref{lcompl}) there exists a $u^\varepsilon \in L^2([0,1],\R^3)$ such that $\gamma_{u^\varepsilon}(1) = (\varepsilon,\tau,0,0)$ and $d^2_0 ((\varepsilon,\tau,0,0)) = \|u^\varepsilon\|^2_{L^2([0,1],\R^3)}$. To be more precise, we have from \eqref{expgammato2} that
\begin{gather}\label{boundary}
\int_0^1 u^\varepsilon_1 (s) ds = \varepsilon,\qquad 
\int_0^1 u^\varepsilon_2 (s) ds = \tau,\qquad 
\int_0^1 u^\varepsilon_3 (s) ds = 0, \\
\label{boundary2}
\int_0^1 \int_0^t u^\varepsilon_3(s) u_1^\varepsilon(t) ds dt  + \int_0^1  A\left(\int_0^t u_2^\varepsilon(s) ds\right) u_1^\varepsilon(t) dt = 0.
\end{gather}
Notice that $d^2_0 ((0,\tau,0,0)) = \tau^2$ by Lemma \ref{lemcaldis}. Combining this with the second equation of \eqref{boundary}, we get
\[
\frac{d^2_0 ((\varepsilon,\tau,0,0))  -  d^2_0 ((0,\tau,0,0))}{\varepsilon^2} = \varepsilon^{-2} \left( \int_0^1 (u^\varepsilon_1 (s))^2 ds + \int_0^1 (u^\varepsilon_2 (s) - \tau)^2 ds + \int_0^1 (u^\varepsilon_3 (s))^2 ds \right).
\] 
Since the left-hand side converges as $\varepsilon \to 0^+$, we note that $u^\varepsilon_2 \to \tau$, $u^\varepsilon_3 \to 0$ in $L^2([0,1])$. Furthermore, $\varepsilon^{-1}u_1^\varepsilon$ is bounded in $L^2([0,1])$. By choosing a  suitable subsequence $\{\varepsilon_j\}_{j \in \N}$, the sequence $\varepsilon_j^{-1} u_1^{\varepsilon_j}$ converges weakly to some $\omega \in L^2([0,1])$  as $j\to +\infty$. We conclude that 
\begin{align*}
\|\omega\|_{L^2([0,1])}^2 & \le \liminf_{j \to +\infty} \| \varepsilon_j^{-1} u_1^{\varepsilon_j} \|_{L^2([0,1])}^2 \\
& \le \liminf_{j \to +\infty} \varepsilon_j^{-2} \left( \int_0^1 (u^{\varepsilon_j}_1 (s))^2 ds + \int_0^1 (u^{\varepsilon_j}_2 (s) - \tau)^2 ds + \int_0^1 (u^{\varepsilon_j}_3 (s))^2 ds \right) \\
& = \lim_{j \to +\infty} \frac{d^2_0 ((\varepsilon_j,\tau,0,0))  -  d^2_0 ((0,\tau,0,0))}{\varepsilon_j^2} = \frac{1}{2} \X_1^2 d^2_0 ((0,\tau,0,0)),
\end{align*}
where we used \eqref{calX12} in the last equality. Note that the first equation of \eqref{boundary} and \eqref{boundary2}  yield
\[
\int_0^1 \omega(s) ds = 1, \qquad \int_0^1 \omega(s) A(\tau s) ds = 0.
\]
So we have proved 
\[
\frac{1}{2} \X_1^2 d^2_0 ((0,\tau,0,0)) \ge \inf\left\{\|\omega\|_{L^2([0,1])}^2: \, \omega \in L^2([0,1]), \int_0^1 \omega(s) ds = 1,  \int_0^1 \omega(s) A(\tau s) ds = 0\right\}.
\]
In fact, the opposite inequality is also true since for any 
\[
\omega \in L^2([0,1]), \quad \mbox{with} \quad  \int_0^1 \omega(s) ds = 1, \quad  \int_0^1 \omega(s) A(\tau s) ds = 0,
\]
we can set $\h{u}^\varepsilon_\omega \coloneq (\varepsilon \omega, \tau, 0)$ and it is not hard to show that $\gamma_{\h{u}^\varepsilon_\omega}$ is a curve joining $0$ to $(\varepsilon, \tau, 0, 0)$. As a consequence, we obtain
\[
d^2_0 ((\varepsilon,\tau,0,0)) \le \varepsilon^2 \|\omega\|_{L^2([0,1])}^2 + \tau^2,
\]
which implies the opposite inequality by \eqref{calX12} and by taking infimum with respect to all such $\omega$. 

\medskip

It remains to  compute
\begin{align}\label{calX122}
\inf\left\{\|\omega\|_{L^2([0,1])}^2: \, \int_0^1 \omega(s) ds = 1,  \int_0^1 \omega(s) A(\tau s) ds = 0\right\}.
\end{align}
Define smooth maps $\varphi: L^2([0,1]) \to \R$ and $F: L^2([0,1]) \to \R^2$ by
\[
\varphi(\omega) \coloneq \|\omega\|_{L^2([0,1])}^2, \qquad F(\omega) \coloneq \begin{pmatrix}
\int_0^1 \omega(s) ds \\
\int_0^1 \omega(s) A(\tau s) ds
\end{pmatrix}.
\]
So  that \eqref{calX122} is precisely $\inf \varphi|_{F^{-1}((1,0))}$.  This is a minimization problem with a convex functional and linear constraint, so that it has a unique solution; we denote it by $\omega_*$. By the method of Lagrange multipliers there is $\nu \coloneq (\nu_1,\nu_2,\nu_3) \ne 0$ such that
\[
\nu_1 d \varphi(\omega_*) + \langle (\nu_2, \nu_3), d F(\omega_*) \rangle = 0,
\]
which is exactly 
\[
 \int_0^1 (2 \nu_1 \omega_*(s) + \nu_2 + \nu_3 A(\tau s))  v(s) ds  = 0, \qquad \forall \, v \in L^2([0,1]).
\]
This implies
\[
2 \nu_1 \omega_*(s) + \nu_2 + \nu_3 A(\tau s) = 0, \qquad \mbox{for a.e.} \ s \in [0,1].
\]
If $\nu_1 =0$, then $[0,1]\ni s \mapsto \nu_3  A(\tau s)$ must be constant. Since by construction $A(\tau)=0$ and $A\not\equiv 0$ on $[0,\tau]$ this yields $\nu=0$, which is not possible. As a consequence, we assume $\nu_1 =-1/2$ and get
\[
 \omega_*(s) =  \nu_2 + \nu_3 A(\tau s).
\]
Since $F(\omega_*) = (1,0)$, we  obtain
\begin{equation}
    \nu_2 = \frac{\int_0^1 A(\tau s)^2ds}{\int_0^1 A(\tau s)^2ds - \left( \int_0^1 A(\tau s) ds \right)^2}, \qquad \nu_3 = -  \frac{\int_0^1 A(\tau s)ds}{\int_0^1 A(\tau s)^2ds - \left( \int_0^1 A(\tau s) ds \right)^2}.
\end{equation}
Finally we can calculate
\[
\varphi(\omega_*) = \frac{\int_0^1 A(\tau s)^2 ds}{\int_0^1 A(\tau s)^2ds - \left( \int_0^1 A(\tau s) ds \right)^2},
\]
and we conclude the proof.
\end{proof}

\begin{proof}[Proof of Theorem \ref{tloc}]
It follows from Proposition \ref{pideal} that the sub-Riemannian structure given by \eqref{framecont} is step $2$, equiregular, and ideal. Let $\{\tau_j\}_{j \in \N}$ be the sequence of assumption \eqref{assumptionii} on the function $A$. By Corollary \ref{csmooth} we have $\{(0,\tau_j,0,0)\}_{j \in \N} \subset \mathcal{S}(0)$. As a consequence, Lemma \ref{lemcalX12}, together with \eqref{conditionB3}, gives (recalling \eqref{defsubLap2})
\[
\Delta_\mathscr{L} d_0^2 ((0,\tau_j,0,0)) = 4 + \frac{2}{1 - \frac{\left( \int_0^1 A(\tau_j s) ds \right)^2}{\int_0^1 A(\tau_j s)^2 ds}} \to +\infty, \qquad \mbox{as} \quad j \to +\infty.
\]
Now let $\mu$ be a smooth positive measure.  To obtain an analogous limit for $\Delta_\mu d_0^2 ((0,\tau_j,0,0))$ we use the
horizontal Eikonal equation (see e.g.\ \cite[Proposition 5.7]{FR10}), which implies
\[
(\X_1 d_0((0,\tau_j,0,0)))^2 + (\X_2 d_0((0,\tau_j,0,0)))^2 + (\X_3 d_0((0,\tau_j,0,0)))^2 = 1.
\]
Thus, the term $|(\X_1 V)  \X_1 d_0^2  + (\X_2 V)   \X_2 d_0^2 + (\X_3 V) \X_3 d_0^2|$ appearing in \eqref{defsubLap} is bounded on the sequence $\{(0,\tau_j,0,0)\}_{j \in \N}$. In conclusion, we obtain the limit
\begin{align}\label{keylim}
\Delta_\mu d_0^2 ((0,\tau_j,0,0))   \to +\infty, \qquad \mbox{as} \quad j \to +\infty.
\end{align}
Finally, thanks to Proposition \ref{pstarshaped}, we can apply  the characterization of $\MCP$ of Proposition \ref{pchaMCP} and, more precisely, Remark \ref{rmk:echaMCP2}, to conclude that \eqref{keylim} contradicts $\MCP_{\mathrm{loc}}(K,N)$ for any $K \in \R$ and $N > 1$.
\end{proof}

\subsection{Construction of the function \texorpdfstring{$A$}{A}}\label{sscA}

We now prove the existence of a function satisfying the assumptions \eqref{assumptioni}--\eqref{assumptionii} of Theorem \ref{tloc}. The additional assumption below is only needed for the subsequent compactification.

\begin{proposition}\label{propconA}
There is a function $A \in C^\infty(\R)$ such that
\begin{enumerate}[(i)]
\item $A$ is not affine on any non-trivial closed interval of $\R$;
\item there exists a sequence $\{\tau_j\}_{j \in \N} \subset (0,1]$ such that $\tau_j \to 0^+$, $A(\tau_j) = 0$, and
\begin{align}\label{conditionB2}
\frac{\left( \int_0^1 A(\tau_j s) ds \right)^2}{\int_0^1 A(\tau_j s)^2 ds} \to 1^{-}, \qquad \mbox{as} \quad j \to +\infty;
\end{align}
\item $A$ is periodic with period $1$.
\end{enumerate}
\end{proposition}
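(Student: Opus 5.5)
The plan is to build $A$ explicitly, by gluing on a sequence of rapidly shrinking scales, as a function on $[0,1]$ that is flat (all derivatives vanish) at $0$ and at $1$. Extending it $1$-periodically then gives a $C^\infty$ function on $\R$, which settles (iii). To have flatness at both ends for free, I would build $A$ on $[0,\tfrac12]$ and set $A(1-t)\coloneq A(t)$, with $A$ constant (positive) in a neighbourhood of $t=\tfrac12$ before the multiplicative correction described below.

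\textbf{Choice of scales and plateau values.} Choose $\tau_1=\tfrac14$ and $\tau_{j+1}\coloneq \tau_j/(j+1)$, so that $[0,\tau_{j+1}]$ occupies a vanishing fraction $1/(j+1)$ of $[0,\tau_j]$. Choose widths $\eta_j\to0$, say $\eta_j=1/j$, and plateau heights $c_j>0$ with $c_j$ decreasing extremely fast, for instance $c_j\le \exp(-\tau_{j+1}^{-2})$. I would then define an auxiliary function $B\ge0$ as follows.
\begin{itemize}
\item On $[\tau_{j+1}(1+\eta_{j+1}),\,\tau_j(1-\eta_j)]$ set $B\equiv c_j$.
\item On $[\tau_j(1-\eta_j),\tau_j]$, let $B$ descend smoothly from $c_j$ to $0$, with $B(\tau_j)=0$.
\item On $[\tau_j,\tau_j(1+\eta_j)]$, let $B$ ascend smoothly from $0$ to $c_{j-1}$.
\end{itemize}
The transitions are built from rescaled copies of a fixed step function made out of $e^{-1/x}$. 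For these I would take a step that is positive strictly inside and flat at the gluing points, so that $B>0$ away from $\{0\}\cup\{\tau_j\}$.

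\textbf{Smoothness at $0$.} The $k$-th derivative of $B$ on the $j$-th block is bounded by $C_k\,c_{j-1}\,(\eta_j\tau_j)^{-k}$. The superexponential decay of $c_j$ beats any power of $\tau_j^{-1}$, so every derivative of $B$ tends to $0$ at $0$. Hence $B$ is $C^\infty$ and flat at $0$.

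\textbf{The ratio condition.} On $[0,\tau_j]$ the function $B(\tau_j\,\cdot)$ equals $c_j$ except on a set of measure at most $\eta_j+\eta_{j+1}+1/(j+1)\to0$, and there it satisfies $0\le B\le c_j$. This uses $c_{j+1}\le c_j$ and monotonicity of the transitions on $[0,\tau_{j+1}]$. It follows that
\[
\int_0^1 B(\tau_j s)\,ds=c_j(1-o(1)), \qquad \int_0^1 B(\tau_j s)^2\,ds=c_j^2(1-o(1)),
\]
so the quotient in \eqref{conditionB2} tends to $1$. It stays strictly below $1$ by the equality case of Cauchy--Schwarz, since $B(\tau_j\cdot)$ is nonconstant; indeed it vanishes at $s=1$. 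Multiplying $B$ by a fixed smooth positive factor $m$ with $m(0)=1$ (for instance $m(t)=1+t^2$ on $[0,\tfrac12]$, reflected) changes each plateau value by a factor $1+O(\tau_j^2)$ uniformly on $[0,\tau_j]$. So the ratio computation survives for $A\coloneq mB$, and we keep $A(\tau_j)=0$ and $\tau_j\in(0,1]$.

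\textbf{Non-affinity, the main obstacle.} Condition (i) is where the construction is most delicate, since plateaus are affine. The factor $m$ is meant to fix this. On a plateau, $A=c_j m$ with $m''>0$, so $A$ is not affine there. On a transition interval, $A=c\,m\,\rho$ with $\rho$ a rescaled $e^{-1/x}$-type step. If this were affine on some interval, then $\rho$ would coincide there with $(at+b)/m$, a rational function. That is impossible for the explicit step, which is not real-analytic-continuable to a rational function on any subinterval; I would verify this by direct inspection of the step's formula. Near $0$ every interval meets some plateau or transition, so this covers all intervals. I expect this to be the step needing most care. The cleanest route is to choose the step and $m$ so that $A$ is real-analytic on each open piece between consecutive gluing points, which reduces (i) to checking that no piece is globally affine.
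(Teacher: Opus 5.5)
Your construction follows the same strategy as the paper: scales with $\tau_{j+1}/\tau_j\to0$, a profile on $[\tau_{j+1},\tau_j]$ that after rescaling by $\tau_j$ is $L^2$-close to a constant, amplitudes decaying fast enough to make $A$ flat at $0$, and a $1$-periodic extension. Your ratio and flatness-at-$0$ arguments are correct.

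The paper's version is leaner. On each $(\tau_{j+1},\tau_j)$, with $\tau_0=1$ and $\tau_{j+1}=\tau_j^2/4$, it places a single bump $a_j\psi_{2^{-j}}$, where $\psi_\delta(s)=\exp(\delta(4-1/(s(1-s))))$ tends to $1$ in $L^2$ as $\delta\to0$. Since $\psi_\delta$ is real-analytic and non-constant on $(0,1)$, condition (i) is automatic, and flatness at $1=\tau_0$ comes from $\psi$ itself. So the paper needs no plateaus, no correction factor, no rational-function argument and no reflection. Your flat plateaus force you to introduce $m$. Your transition argument is nonetheless fine: if $\rho=(at+b)/(cm)$ on a subinterval, analyticity propagates this to the whole open transition interval, and flatness of $\rho$ at its endpoint forces $a=b=0$, contradicting $\rho>0$.

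Two details fail as written, though both are easy to repair.

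First, $m(t)=1+t^2$ on $[0,\tfrac12]$ reflected about $\tfrac12$ is not smooth, since $m'(\tfrac12^\mp)=\pm1$. Because $B\equiv c_0>0$ near $\tfrac12$, $A=mB$ has a corner there, with $A'(\tfrac12^\mp)=\pm c_0$, so $A\notin C^1$. Instead, multiply the already-reflected $B$ by $m(t)=1+t^2$ on all of $[0,1]$, or use a smooth symmetric choice such as $1+t^2(1-t)^2$. Flatness of $B$ at $0$ and $1$ still makes the periodic extension of $A$ smooth, and $1\le m\le1+\tau_j^2$ on $[0,\tau_j]$ still holds.

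Second, $\eta_1=1$ makes the first descent interval all of $[0,\tau_1]$ and leaves no room for a plateau. You need $\tau_{j+1}(1+\eta_{j+1})<\tau_j(1-\eta_j)$ for every $j$, which holds for example with $\eta_j=1/(4j)$.
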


\begin{proof}
\begin{figure}[htp]
	\centering
    \includegraphics[scale = 0.25]{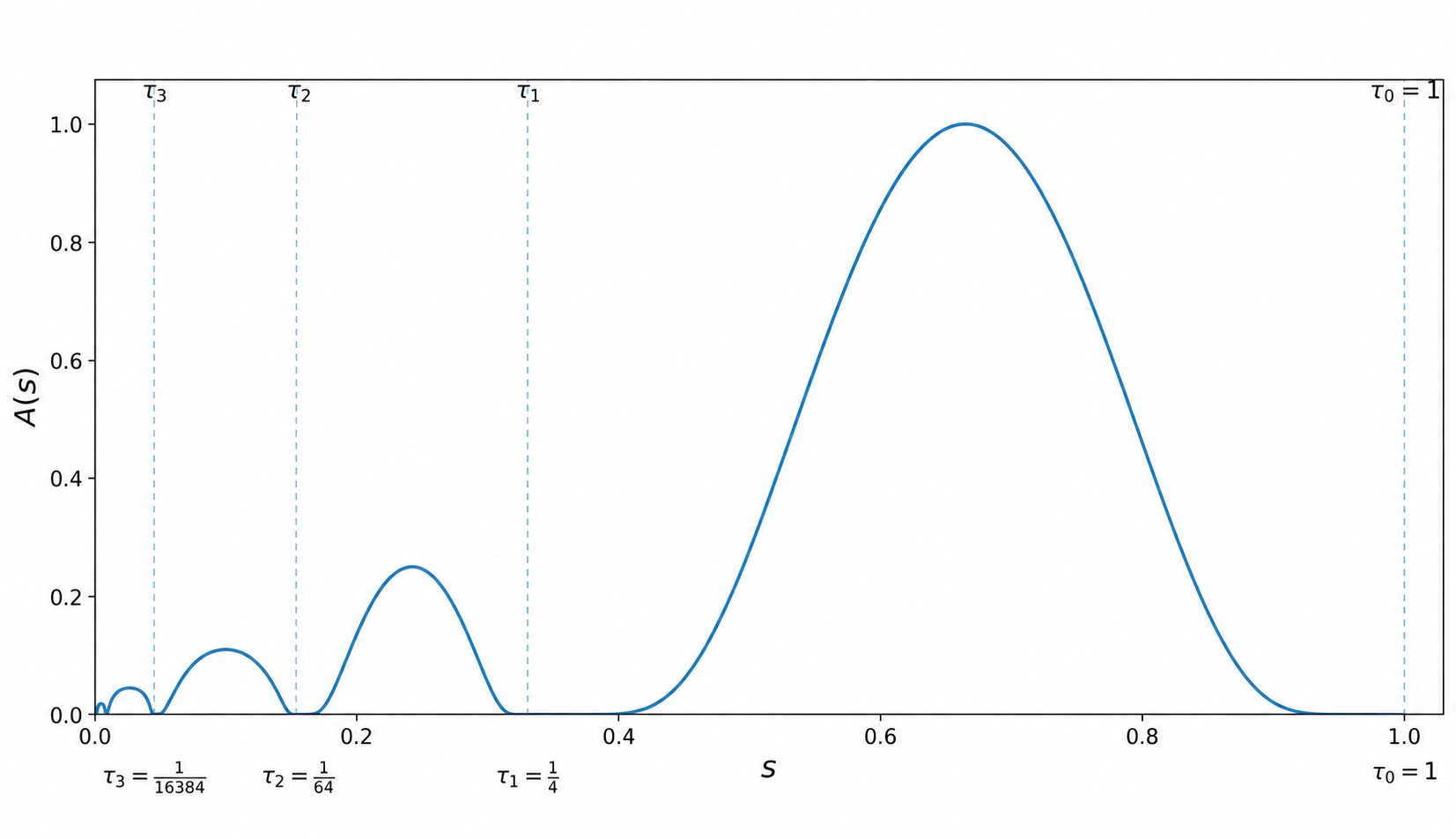}
	\caption[image]{Plot of the function $A$ on $[0,1]$.}\label{FigA}
\end{figure}

 Set $\tau_0 = 1$, define $\{\tau_j\}_{j \in \N}$ recursively by 
\begin{align}\label{deftauj}
\tau_{j+1}=\frac{\tau_j^2}{4}, \qquad \forall \, j \in \N.
\end{align}
Then for any $\delta >0$, we define the auxiliary function $\psi_\delta: \R \to \R$ by
\begin{align}\label{defpsid}
\psi_\delta(s)  \coloneq
\begin{cases}
 \exp\left(\delta \left( 4 -\frac{1}{s(1-s)}\right) \right), \qquad 
      & 0 < s < 1,\\
  0,  & s \notin (0,1).
 \end{cases}
\end{align}
Notice that  $\psi_\delta \in C^\infty(\R)$,
\[
0 \le \psi_\delta \le 1, \qquad \psi_\delta\left( \frac{1}{2} \right) = 1,
\]
and 
\[
\psi_\delta \to 1, \qquad \mbox{as} \quad \delta \to 0^+,
\]
pointwise in $(0,1)$ and thus as a function in $L^2([0,1])$ by the dominated convergence theorem. Now we give the construction of our function $A$ first on $[0,1]$:
\begin{equation}\label{defB}
 A(s) \coloneq \begin{cases}
a_j \psi_{2^{-j}} \left(\frac{s - \tau_{j+1}}{\tau_j - \tau_{j + 1}}\right), \qquad & s \in (\tau_{j + 1}, \tau_j), j \in \N, \\
0, \qquad & s = \tau_j, j \in \N \quad \mbox{or} \quad s = 0,
\end{cases}
\end{equation}
where $\{a_j\}_{j \in \N} \subset (0,1]$ is a decreasing sequence to guarantee the smoothness of the function $A$ at $0$ (by ensuring that every derivative vanishes at $0$). For example, we can choose $a_0 = 1$ and define $\{a_j\}_{j \in \N}$ recursively by the following process: having chosen $a_j$, we let
\[
a_{j + 1} \coloneq \min\left\{ a_j, \frac{1}{j + 1} \left( 1 + \sum_{\ell = 0}^{j + 1} (\tau_{j + 1} - \tau_{j + 2})^{-\ell}  \|\psi_{2^{-j - 1}}^{(\ell)}\|_{L^\infty (\R)}   \right)^{-1} \right\}, \qquad \forall \, j \in \N.
\]
With this choice of $\{a_j\}_{j \in \N}$ we can periodically extend $A$ to a smooth periodic function on $\R$, periodic with period $1$ (since all derivatives vanish at both $0$ and $1$). By construction $A$ is not affine on any non-trivial closed interval of $\R$. 

We are left to prove \eqref{conditionB2}. In fact, it is sufficient to prove $A(\tau_j \cdot)/a_j \to 1$ as $j \to +\infty$ in $L^2([0,1])$. Define $r_j \coloneq \frac{\tau_{j + 1}}{\tau_j} = \frac{\tau_j}{4} \to 0^+$ by \eqref{deftauj}. We split the following integral into two parts:
\[
\int_0^1 \left| \frac{A(\tau_j s)}{a_j} - 1\right|^2 ds
= \int_0^{r_j} \left| \frac{A(\tau_j s)}{a_j} - 1\right|^2 ds + \int_{r_j}^1 \left| \frac{A(\tau_j s)}{a_j} - 1\right|^2 ds.
\]
Since by construction $A(\tau_j s) \leq a_j$ for $s \in [0,1]$, for the first integral we have the estimate:
\[
\int_0^{r_j} \left| \frac{A(\tau_j s)}{a_j} - 1\right|^2 ds \le 4 r_j \to 0, \quad \mbox{as} \quad j \to +\infty.
\]
For the second integral by a change of variables we have 
\[
\int_{r_j}^1 \left| \frac{A(\tau_j s)}{a_j} - 1\right|^2 ds = (1 - r_j) \int_0^1 |\psi_{2^{-j}}(s) - 1|^2 ds \to 0 \quad \mbox{as} \quad j \to +\infty.
\]
This concludes the proof. 
\end{proof}

\begin{remark}[A technical comment]\label{rnondege}
The function $A$ given in Proposition \ref{propconA} itself is not a counterexample to the validity of \cite[Lemma 2.11]{BR20} in the smooth setting. However, with such $A$ we can construct a counterexample: define $h(t,\kappa) \coloneq A(\kappa t)$, which is a smooth function on $[0,1] \times [0,1]$; then for any $\tau \in (0,1]$ and $j \in \N$ such that $\tau_j \le \tau$, after a change of variables \eqref{conditionB2} becomes
\[
\frac{\left( \int_0^{\tau} h(t,\kappa_j) dt \right)^2}{\tau \int_0^{\tau} h(t,\kappa_j)^2 dt}
= \frac{ \left( \int_0^{\tau} A(\kappa_j t) dt \right)^2}{\tau \int_0^{\tau} A(\kappa_j t)^2 dt} \to 1^{-}, \qquad \mbox{as} \quad j \to +\infty,
\]
where $\kappa_j := \tau_j / \tau$. Thus, there is no $\nu \in (0,1)$ such that 
\[
\left( \int_0^{\tau} h(t,\kappa) dt \right)^2 \le \nu \tau \int_0^{\tau} h(t,\kappa)^2 dt, \qquad \forall \, \kappa \in [0,1].
\]
In fact, such function $h$ is closely related with the function $\t{s}$ or the function $h$ in the proof of \cite[Lemma 2.10]{BR20} for the sub-Riemannian structure given by \eqref{framecont}.
On the other hand, using the notation in \cite{BR20}, if in addition we have a number $N \ge 1$ such that  $\sum_{j = 1}^N |\partial_t^j h(0,\kappa)| \ne 0$ for all $\kappa \in \mathcal{K}$, it is not hard to show that an analogue of \cite[Lemma 2.11]{BR20} still holds. A result of this kind can be used to prove that every compact sub-Riemannian manifold with fat distribution admits $\MCP(0,N)$ for some finite $N$. This would be a simpler proof of the one in \cite[Corollary 9.24]{BMR26}, that is based on the so-called canonical frame.
\end{remark}

\subsection{Proof of Theorem \ref{tgol}}\label{ssce}

 In this section we compactify the construction of Theorem \ref{tloc}.
\begin{proof}
Let $M = \R^4$ be the example obtained in Theorem \ref{tloc} with the function $A$ satisfying all the properties in Proposition \ref{propconA}. 
We give $M = \R^4$ a  Lie group structure by the following group law:
\begin{align}\label{groupa}
    (x_1,y_1,z_1,w_1) \cdot  (x_2,y_2,z_2,w_2) \coloneq  (x_1 + x_2, y_1 + y_2, z_1 + z_2, w_1 + w_2 + x_1 z_2).
\end{align}
With this group law $\Z^4$ forms a closed subgroup and we write a point in $\Z^4$  as $\sigma$. Now we let $\t{M} \coloneq M / \Z^4$ and let $\pi: M \to \t{M}$ be the quotient map. Since $A$ is periodic, we obtain
\[
dR_{\sigma} (\xi) \X_i(\xi) =  \X_i(\xi \cdot \sigma), \qquad \forall \, i = 1,2,3, \xi \in M, \sigma \in \Z^4,
\]
where $R_{\sigma}(\xi) \coloneq \xi  \cdot \sigma $, and we obtain that 
\[
\t{\X}_1 \coloneq d\pi \, \X_1 , \quad \t{\X}_2 \coloneq d\pi \, \X_2, \quad \t{\X}_3 \coloneq d\pi \, \X_3,
\]
give a well-defined generating frame for $\t{M}$. By construction, the sub-Riemannian structure has step $2$ and is equiregular. Notice that $\t{M}$ is compact, which implies that it is also complete. If we use  $\t{E}_{\t{\xi}}$ and $\t{d}$ to denote the  end-point map based at $\t{\xi}$ and the sub-Riemannian distance on $\t{M}$ respectively, it is not hard to show the following:
\begin{align}
 \label{relend}
  \t{E}_{\pi(\xi)}(u) &= \pi(E_\xi (u)), \qquad \forall \, \xi \in M, u \in \mathcal{U}_\xi, \\
 \label{reldis}
 \t{d}(\pi(\xi_1), \pi(\xi_2)) &= \min \{d(\xi_1, \xi_2 \cdot \sigma) : \, \sigma \in \Z^4\},  \qquad \forall \, \xi_1, \xi_2 \in M. 
\end{align}
With the help of \eqref{relend} and \eqref{reldis}, we can show that $\t{M}$ is also ideal. Furthermore, thanks to \eqref{reldis} again, in a sufficiently small neighbourhood of $0$ in $M$,  we have 
\[
\t{d}^2_{\pi(0)} (\pi(\xi)) = \t{d}^2(\pi(0),\pi(\xi)) = d^2(0,\xi) = d_0^2(\xi).
\]
We obtain that, when $\tau > 0$ and $\tau$ small, $\pi((0,\tau,0,0))$ belongs to the set of smooth points of $\pi(0)$ and
\begin{align}
 (\t{\X}_1^2 + \t{\X}_2^2 + \t{\X}_3^2) \t{d}^2_{\pi(0)}(\pi((0,\tau,0,0))) = (\X_1^2 + \X_2^2 + \X_3^2) d_0^2((0,\tau,0,0)).
\end{align}
As a consequence, the same argument as in the proof of Theorem \ref{tloc} proves that $\t{M}$ does not satisfy the $\MCP(K,N)$ for $K \in \R$ and $N\in (1,\infty)$.
\end{proof}

%\nocite{*}
\bibliographystyle{abbrv}
\bibliography{CSCSCbib}

\end{document}